\def\F{\mathbb{F}}
\def\deg{{\rm deg}}
\def\dim{{\rm dim}}
\def\rank{{\rm rank}}
\newcommand{\bbP}{{\operatorname{\bf P}}}
\newcommand{\Proj}{\operatorname{Proj}}
\newcommand{\PGL}{\operatorname{PGL}}
\newcommand{\Aut}{\operatorname{Aut}}
\newcommand{\modulo}{\operatorname{mod}}
\newcommand{\Jac}{\operatorname{Jac}}
\newtheorem{theorem}{Theorem}[section]
\newtheorem{lemma}[theorem]{Lemma}
\newtheorem{proposition}{Proposition}[section]
\theoremstyle{definition}
\newtheorem{example}[theorem]{Example}
\newtheorem{algorithm}[theorem]{Algorithm}
\newtheorem{problem}[theorem]{Problem}
\theoremstyle{remark}
\newtheorem{remark}[theorem]{Remark}
\numberwithin{equation}{section}
\begin{document}

\title{Algorithm to enumerate superspecial Howe curves of genus $4$}


\author{Momonari Kudo}
\address{Kobe City College of Technology, 8-3, Gakuen-Higashimachi,
Nishi-ku, Kobe 651-2194, Japan}
\curraddr{}
\email{m-kudo@math.kyushu-u.ac.jp}
\thanks{}

\author{Shushi Harashita}
\address{Graduate School of Environment and Information Sciences,
Yokohama National University, 79-7, Tokiwadai, Hodogaya-ku,
Yokohama 240-8501, Japan}
\curraddr{}
\email{harasita@ynu.ac.jp}
\thanks{}

\subjclass[2010]{14G05, 14G15, 14G50, 14H45, 14Q05, 68W30}

\keywords{Algebraic curves, Superspecial curves, Supersingular curves, Genus-four curves, Algorithmic approaches in number theory and algebraic geometry}

\date{}

\dedicatory{}

\begin{abstract}
A Howe curve is a curve of genus $4$ obtained as the fiber product over $\bbP^1$ of two elliptic curves. Any Howe curve is canonical.
This paper provides an efficient algorithm to find
superspecial Howe curves and that to enumerate
their isomorphism classes.
We discuss not only an algorithm to test the superspeciality but also an algorithm to test isomorphisms for Howe curves.
Our algorithms are much more efficient than conventional ones proposed by the authors so far for general canonical curves.
We show the existence of a superspecial Howe curve in characteristic $7<p\le 331$
and enumerate the isomorphism classes of superspecial Howe curves in characteristic $p\le 53$, by executing our algorithms over the computer algebra system Magma.
\end{abstract}

\maketitle

\section{Introduction}\label{sec:intro}
\subsection{Background and motivation}
Let $K$ be a field of characteristic $p>0$.
A nonsingular curve over $K$ is called {\it superspecial} (resp.\ {\it supersingular}) if its Jacobian variety is isomorphic (resp.\ isogenous) to a product of supersingular elliptic curves over an algebraically closed field containing $K$.
It is known that a nonsingular curve $X$ over $K$ is superspecial if and only if its Hasse-Witt matrix (resp.\ Cartier-Mainin matrix), which represents the Frobenius (resp.\ Cartier) operator on the cohomology group $H^1 (X, \mathcal{O}_X)$ (resp.\ $H^0(X,\Omega_X^1)$), is zero, where $\mathcal{O}_X$ (resp.\ $\Omega_X^1$) is the structure sheaf on $X$ (resp.\ the sheaf of differential $1$-forms on $X$).
Superspecial curves are not only theoretically interesing in algebraic geometry and number theory but also have many applications in coding theory, cryptology and so on, because they tend to have many rational points and their Jacobian varieties have big endomorphism rings.
However, it is not easy to find such curves since there are only finitely many superspecial curves for given genus and characteristic.
Our basic problem is to find or enumerate superspecial curves.
A method to construct superspecial curves is to consider fiber products of superspecial curves of lower genera.
In this paper, we illustrate that this method is efficient, by considering the simplest one among cases of genus $\ge 4$, that is the case of Howe curves. A {\it Howe curve} is a curve of genus $4$ obtained by fiber products over $\bbP^1$ of two elliptic curves $E_1$ and $E_2$.
In \cite{Howe}, E.\ Howe studied these curves to construct quickly curves with many rational points.

\subsection{Related works}
The reason that we consider the case of genus $g\ge 4$ is that the enumeration of the isomorphism classes of superspecial curves of $g\le 3$ has already been done, see Deuring~\cite{Deuring} for $g=1$, Ibukiyama-Katsura-Oort \cite{IKO} for $g=2$ and Brock~\cite{Brock} for $g=3$, also see Ibukiyama~\cite{Ibukiyama} and Oort~\cite{Oort91} for the existence for $g=3$.
Contrary to the case $g\le 3$, the existence/non-existence of a superspecial curve of genus $4$ in general characteristic is an open problem, while some results for small concrete $p$ are known;
See \cite[Theorem 1.1]{Ekedahl} for the non-existence for $p\le 3$ and \cite[Theorem B]{KH17} for the non-existence for $p=7$.
As for enumeration, computational approaches have been proposed recently in \cite{KH17}, \cite{KH18} and \cite{KH19}, and the main strategy common to \cite{KH17} and \cite{KH18} (resp.\ \cite{KH19}) is:
Parameterizing canonical (resp.\ hyperelliptic) curves $X$ of genus $4$, finding superspecial curves is reduced into computing zeros of a multivariate system derived from the condition that the Hasse-Witt (resp.\ Cartier-Manin) matrix of $X$ is zero.
With computer algebra techniques such as Gr\"{o}bner bases, the authors enumerated superspecial canonical (resp.\ hyperelliptic) curves for $p \leq 11$ (resp.\ $p \leq 23$) in \cite{KH17} and \cite{KH18} (resp.\ \cite{KH19}).
However, results for larger $p$ have not been obtained yet due to the cost of solving multivariate systems, and no complexity analysis is given in \cite{KH17}, \cite{KH18} and \cite{KH19}.

Now we restrict ourselves to Howe curves.
Recently, it is proven in \cite{KHS20} that there exists a supersingular Howe curve in arbitrary positive characteristic.
In particular, the authors of \cite{KHS20} reduced the existence of such a curve into that of a certain zero of a multivariate system as follows:
Given two elliptic curves $E_{A_i,B_i}:y^2 = x^3 + A_ix + B_i$ with $A_i, B_i \in \overline{\mathbb{F}_{p}}$ for $i = 1$ and $2$, they constructed a family of Howe curves $H$ (the normalization of $E_1\times_{\bbP^1} E_2$) with three parameters $\lambda$, $\mu$ and $\nu$ in $\overline{\mathbb{F}_{p}}$, where each $E_i$ is an elliptic curve with $E_i \cong E_{A_i,B_i}$ over $\overline{\mathbb{F}_{p}}$.
They also proved that the supersingularity of $H$ is equivalent to that of $E_1$, $E_2$ and a curve $C$ of genus $2$ if $p\ne 2$, by using the fact (due to Howe \cite[Theorem 2.1]{Howe}) that there exists an isogeny of degree $2^4$ from the Jacobian $J (H)$ to $E_1 \times E_2 \times J(C)$.
Thus, once supersingular $E_{A_i, B_i}$'s are given, the supersingularity of $H$ is reduced into that $(\lambda, \mu, \nu)$ is a certain zero over $\overline{\mathbb{F}_{p}}$ of a multivariate system derived from the supersingularity of $C$.
To show the existence of such a zero $(\lambda, \mu, \nu)$, they also find various algebraic properties of the defining polynomials of the system.
Based on these properties, they completed the proof by showing an analogous result of the quasi-affineness of Ekedahl-Oort strata in the case of abelian varieties.

The above reduction is applicable also for the superspecial case, but the method of \cite{KHS20} to prove the existence could not work well.
For this reason, the superspecial case is still open:
\begin{problem}\label{prob:sspHowe}
Does there exist a superspecial Howe curve for any $p>7$? 
\end{problem}
To obtain a general result for solving Problem \ref{prob:sspHowe}, it is meaningful and useful to obtain computational results for concrete $p$ as large as possible.

\subsection{Our contribution}
Here, we state our main results (Theorems \ref{thm:main1} and \ref{thm:main2} below).
The proofs are computational, and it has three main ingredients (A), (B) and (C) below, where we construct algorithms implemented over the computer algebra system Magma~\cite{Magma}.
In particular, the ingredient (C) provides both theoretically new and computationally efficient method to test for isomorphisms of Howe curves.
The first theorem is on Problem \ref{prob:sspHowe}, and the second one is on the enumeration of superspecial Howe curves.

\begin{theorem}\label{thm:main1}
There exists a superspecial Howe curve in characteristic $p$ if $7<p\le 331$.
\end{theorem}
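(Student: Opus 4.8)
The plan is to reduce the superspeciality of a Howe curve $H$ to a finite search over a low-dimensional parameter space, and then to carry out that search by computer for each prime in the range $7 < p \le 331$. The reduction is the superspecial analogue of what \cite{KHS20} did in the supersingular case, and it is what makes the problem tractable.

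First I would exploit Howe's isogeny. By \cite[Theorem 2.1]{Howe} there is an isogeny $J(H) \to E_1 \times E_2 \times J(C)$ of degree $2^4$, where $C$ is the associated genus-$2$ curve. Since $p > 2$, this isogeny is prime to $p$, hence restricts to an isomorphism on the $p$-divisible groups (equivalently, on the Dieudonn\'e modules) of the two sides. Now $J(H)$ is superspecial if and only if the Frobenius on $H^1(H,\mathcal{O}_H)$ vanishes, equivalently its $a$-number is maximal, and this condition depends only on the $p$-divisible group; it is therefore preserved under a prime-to-$p$ isogeny. Because the $a$-number is additive over products, $J(H)$ is superspecial if and only if each of $E_1$, $E_2$ and $J(C)$ is superspecial. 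This is exactly the observation that the reduction of \cite{KHS20} remains valid in the superspecial setting.

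Next I would fix the two elliptic factors. For elliptic curves superspecial coincides with supersingular, and in each characteristic there are only finitely many supersingular $j$-invariants, which are enumerable (e.g.\ from the supersingular polynomial). Fixing supersingular $E_{A_1,B_1}$ and $E_{A_2,B_2}$, the only remaining condition is that $C = C(\lambda,\mu,\nu)$ be superspecial, i.e.\ that its $2 \times 2$ Cartier-Manin matrix vanish. Writing out these entries as polynomials in the three Howe parameters $\lambda,\mu,\nu$ over $\overline{\F_p}$ yields a multivariate system, and to prove Theorem \ref{thm:main1} it suffices to exhibit, for each $p$ in the range, one solution $(\lambda,\mu,\nu)$ for which the resulting $H$ is a genuine nonsingular Howe curve of genus $4$ (the two double covers of $\bbP^1$ being distinct and suitably ramified). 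The decisive simplification over the general canonical-curve treatment of \cite{KH17,KH18} is that, after fixing $E_1$ and $E_2$, the system lives in only three variables and its entries come from a genus-$2$ rather than a genus-$4$ Cartier-Manin matrix. For each $p$ I would compute these entries by an explicit Cartier-Manin formula and then search the parameter space — say, specializing one coordinate and solving the residual lower-dimensional system by resultants or a short Gr\"obner-basis computation — stopping as soon as one admissible point is found, and finally verifying nonsingularity, genus, and superspeciality of the produced curve directly.

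I expect the main obstacle to be computational scaling rather than conceptual. As $p$ grows toward $331$ the number of supersingular pairs grows roughly like $p^2$, the degrees of the Cartier-Manin entries grow with $p$, and a naive elimination even over three variables becomes costly. The crux is therefore, first, an efficient routine that produces the Cartier-Manin matrix of $C$ as a function of $(\lambda,\mu,\nu)$, and second, a fast superspeciality test so that each candidate pair and each specialization is cheap to accept or reject. These are exactly the efficiency gains that must be engineered to reach $p = 331$, whereas the correctness of the underlying reduction is already secured by the prime-to-$p$ isogeny argument above.
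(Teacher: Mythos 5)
Your proposal is correct and follows the paper's own strategy in its mathematical core: Howe's degree-$2^4$ isogeny is prime to $p$, hence identifies the $p$-kernels (the paper phrases this via $\varphi\circ\psi=[2]$ being an isomorphism on $p$-torsion for odd $p$), so $H$ is superspecial iff $E_1,E_2$ are supersingular and the genus-$2$ curve $C$ is superspecial, i.e.\ its $2\times 2$ Cartier--Manin matrix vanishes; one then runs over the finitely many supersingular pairs and solves for $(\lambda,\mu,\nu)$, normalizing $\nu=1$ by homogeneity. Where you genuinely diverge is in how the search for $(\lambda,\mu)$ is organized. You keep the parameters in $\overline{\F_p}$ and decide solvability by resultants or Gr\"obner elimination in two variables --- this is the paper's variant (iii) of Step (2-1), and it is logically sufficient for the existence statement, since elimination detects points over the algebraic closure and a found witness can be verified directly. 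The paper instead first proves Proposition \ref{prop:field_of_def}: every superspecial Howe curve arises with all seven parameters in $\F_{p^2}$ (via Ekedahl's descent with $F=\pm p$, forcing the ramification points of $E_i\to\bbP^1$ to be $\F_{p^2}$-rational). This lemma, which your argument never needs but also never supplies, is what justifies the paper's fastest variant (ii): brute-force $\mu\in\F_{p^2}$ and recover $\lambda$ as a root of $\gcd(a,b,c,d)$, with per-pair cost $O\left(p^3\log(p)\,\mathsf{M}(p)\right)$. Their analysis shows variant (ii) is at least as fast as your variant (iii), whose bottleneck --- computing the Cartier--Manin entries symbolically as bivariate polynomials --- is exactly the step the paper flags as most expensive and leaves unresolved. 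So your route proves Theorem \ref{thm:main1}, but it forgoes the search-space reduction that the paper credits for reaching $p=331$, and it would not by itself support the exhaustive enumeration over $\F_{p^2}$ underlying Theorem \ref{thm:main2}.
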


\begin{theorem}\label{thm:main2}
We enumerate the isomorphism classes of superspecial Howe curves in characteristic $p\le 53$, see Table \ref{table:1}.
\end{theorem}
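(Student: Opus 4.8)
The proof is entirely computational, and I would organize it around the algorithmic ingredients (A), (B) and (C) introduced above. The overall plan is: (1) fix a finite parameter space covering all Howe curves; (2) cut out the superspecial locus as the zero set of the Hasse--Witt matrix; (3) solve the resulting system for each prime $p\le 53$; and (4) sort the solutions into $\overline{\mathbb{F}_p}$-isomorphism classes and count them.

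First I would fix a normal form. A Howe curve $H$ is the normalization of $E_1\times_{\mathbb{P}^1}E_2$, so it is determined by the two double covers $E_i\to\mathbb{P}^1$ together with the identification of their common branch point. Acting by $\PGL_2$ on $\mathbb{P}^1$ to send three of the branch points to $0,1,\infty$, the data reduces to the elliptic-curve coefficients together with the three gluing parameters $\lambda,\mu,\nu$ of \cite{KHS20}. Since a superspecial $J(H)$ is supersingular and $J(H)$ is isogenous to $E_1\times E_2\times J(C)$, both $E_1$ and $E_2$ are supersingular, hence --- being elliptic curves --- superspecial; I would therefore let $E_1$ and $E_2$ range only over the finitely many (roughly $p/12$) supersingular elliptic curves, turning the search into: for each ordered pair $(E_1,E_2)$, solve for $(\lambda,\mu,\nu)$ making $H$ superspecial.

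Next, using that every Howe curve is canonical, I would compute its Hasse--Witt (equivalently Cartier--Manin) matrix directly from the canonical model; exploiting the fiber-product structure is what makes this step (the superspeciality test) far cheaper than the generic genus-$4$ computation of \cite{KH17,KH18}. Imposing the vanishing of every entry of this matrix gives a multivariate polynomial system in $(\lambda,\mu,\nu)$, which I would solve with a Gr\"{o}bner-basis computation in Magma to recover the full finite set of superspecial Howe curves attached to each pair.

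Finally, to pass from the raw solution set to isomorphism classes I would apply ingredient (C), the dedicated isomorphism test for Howe curves, deciding for each pair of solutions whether the corresponding curves are isomorphic over $\overline{\mathbb{F}_p}$ and counting the classes, as recorded in Table \ref{table:1}. I expect the main obstacle to be computational cost: recovering \emph{all} solutions (rather than a single one, as suffices for Theorem \ref{thm:main1}) and then running the pairwise isomorphism comparisons scales rapidly with $p$, which is precisely why the enumeration halts at $p=53$ whereas existence reaches $p=331$. A secondary but essential point is to make the normal form and the isomorphism test correctly track the automorphisms that swap the two elliptic factors and permute branch points, so that each isomorphism class is counted exactly once.
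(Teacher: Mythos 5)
Your overall plan---fix the parameter space $(A_1,B_1,A_2,B_2,\lambda,\mu,\nu)$ with $E_1,E_2$ ranging over supersingular curves, cut out the superspecial locus, solve for each $p\le 53$, then classify with the dedicated isomorphism test---is indeed the paper's plan, and your final step, including the attention to symmetries swapping the two elliptic factors and permuting branch points, matches what Proposition \ref{prop:isom} and Algorithm \ref{alg:main2} do (the $\mathfrak{S}_3\times\mathfrak{S}_3\times\mathfrak{S}_2$ action on the ramification data). However, there are two genuine gaps in your middle steps, and they are exactly the two reductions on which the paper's proof lives. First, your superspeciality test is internally inconsistent: computing the Hasse--Witt matrix \emph{directly from the canonical model} of the genus-$4$ curve is precisely the generic computation of \cite{KH17}, \cite{KH18}---it does not exploit the fiber-product structure at all, and it is what limited those works to $p\le 11$. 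The paper's actual test (Lemma \ref{lem:ssp}) is different: by Howe's degree-$2^4$ isogeny $J(H)\to E_1\times E_2\times J(C)$, which for odd $p$ induces an isomorphism on $p$-kernels, $H$ is superspecial if and only if $E_1,E_2$ are supersingular \emph{and} the genus-$2$ curve $C\colon u^2=f_1f_2$ has vanishing $2\times 2$ Cartier--Manin matrix. This replaces the genus-$4$ system by the four equations $a=b=c=d=0$ in $(\lambda,\mu)$ (after normalizing $\nu=1$), and that reduction---absent from your proposal---is what makes $p\le 53$ reachable.

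Second, you never address the field of definition, which is ingredient (A). An enumeration by Gr\"obner bases must produce \emph{all} solutions over $\overline{\mathbb{F}_p}$, which requires splitting-field bookkeeping and an argument that the ideal is zero-dimensional away from the degenerate locus where $f_1$ and $f_2$ fail to be coprime. The paper removes all of this with Proposition \ref{prop:field_of_def}: every superspecial Howe curve already arises from parameters in $\mathbb{F}_{p^2}$ (via descent so that $F=\pm p$ on the Jacobian, which forces the ramification points to be $\mathbb{F}_{p^2}$-rational). With that, the paper does not use Gr\"obner bases for the solving step at all: Step (2-1) of Algorithm \ref{alg:main1} (in its fastest variant (ii)) brute-forces $\mu\in\mathbb{F}_{p^2}$, finds $\lambda$ as a root of $\gcd(a,b,c,d)\in\mathbb{F}_{p^2}[\lambda]$, and then discards non-Howe-type solutions by checking $\mathrm{Res}_x(f_1,f_2)\ne 0$; Gr\"obner bases appear only inside the isomorphism-classification step, in computing ${\rm EQ}(H)$. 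As stated, your proposal reverts to the \cite{KH17}, \cite{KH18} methodology for the hardest part of the computation, so it would not establish the theorem at the stated range of $p$.
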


Each theorem holds in any characteristic bounded by a constant (which can be updated).
The constant is relatively large, compared with $p \le 7$ in \cite{KH17}, $p\le 11$ in \cite{KH18}, and $p\le 23$ in \cite{KH19}.
That shows an advantage of the use of Howe curves (with our algorithms below).

The three key ingredients are the following (A), (B) and (C):
\paragraph{\bf (A) Reduction of the search space.}
We discuss the field of definition of superspecial Howe curves (cf.\;Proposition \ref{prop:field_of_def}), which enables us to reduce the amount of our search drastically.
Specifically, the coordinates of $(A_1, B_1, A_2, B_2, \lambda, \mu, \nu)$ belong to $\mathbb{F}_{p^2}$, whereas those in the supersingular case~\cite{KHS20} can belong to larger extension fields in the algebraic closure $\overline{\mathbb{F}_{p}}$.

\paragraph{\bf (B) Algorithm to list superspecial Howe curves.}
Once the search space for $(A_1, B_1, A_2, B_2, \lambda, \mu, \nu)$ is reduced from $(\overline{\mathbb{F}_{p}})^7$ to $(\mathbb{F}_{p^2})^7$ by (A),
we develop an algorithm to list $(A_1, B_1, A_2, B_2, \lambda, \mu, \nu) \in (\mathbb{F}_{p^2})^7$ for which $H$ is superspecial, as well as our previous works~\cite{KH17}, \cite{KH18}, \cite{KH19}.
Different from \cite{KH17}, \cite{KH18}, \cite{KH19}, we also analyze the (time) complexity of the algorithm concretely.
In our framework, we first determine $(A_1, B_1, A_2, B_2)$ from a pair of supersingular $j$-invariants since a supersingular elliptic curve always exists for arbitrary $p>0$.
Moreover, we may assume $\nu = 1$, see Remark \ref{rem:nu1}.
The algorithm computes $(\lambda, \mu)$ as zeros over $\mathbb{F}_{p^2}$ of a multivariate system derived from the superspeciality of $C$.
Since the number of variables is two ($\lambda$ and $\mu$), computing $(\lambda, \mu)$ has the following three variants:
\begin{enumerate}
\item[(i)] Use the brute-force on $(\lambda, \mu) \in (\mathbb{F}_{p^2})^2$.
\item[(ii)] Conduct the brute-force only on $\lambda$ (resp.\ $\mu$), and compute $\mu$ (resp.\ $\lambda$) as a root of a univariate polynomial with the gcd computation.
\item[(iii)] Regarding both $\lambda$ and $\mu$ as variables, use an approach based on resultants.
\end{enumerate}
Analyzing the complexity of each variant, we shall determine the fastest one.

\paragraph{\bf (C) New isomorphism-tests for Howe curves.}
Once superspecial Howe curves have been listed by (B), we shall classify their isomorphism classes to obtain Theorem \ref{thm:main2}.
For this, an efficient isomorphism-test is required since many superspecial Howe curves are found by (B).
As any Howe curve is canonical (cf.\;Proposition \ref{HoweIsCanonical}), one may check whether two Howe curves are isomorphic, by the isomorphism-test given in \cite[6.1]{KH17} for canonical curves, whose implementation is found in \cite[4.3]{KH18}, see Subsection \ref{IsomTestCanModel} for a brief review.
However, it turns out to be very costly, since it requires much Gr\"obner basis computations.
As an alternative method, we propose the use of the relative positions of the ramified points of $E_i\to \bbP^1$ for a Howe curve $H$, which is the normalization of $E_1\times_{\bbP^1}E_2$, where two elliptic quotients $E_1,E_2$ of $H$ are not uniquely determined by $H$.
Our isomorphism-test (cf.\;Proposition \ref{prop:isom}) is obtained by enumerating all quotients
$E_1,E_2$ of $H$ such that $H$ is the normalizaion of $E_1\times_{\bbP^1}E_2$ and by
looking at the relative position of ramified points  $E_i\to \bbP^1$.
Computational results show that the classification of isomorphism classes by this new isomorphism-test can be much (about $100$ times on average) faster than that by the conventional one (\cite{KH17}, \cite{KH18}) for canonical curves. 

\if 0
It is known that 
any superspecial curve over $\overline{\F_p}$
descends to an $\F_{p^2}$-maximal or $\F_{p^2}$-minimal curve,
where an $\F_{q}$-maximal (resp.\;minimal) curve is a curve $C$ over $\F_q$
such that $|C(\F_q)|$ attains the Hasse-Weil bound $1+q+2g\sqrt{q}$ (resp.\;$1+q-2g\sqrt{q}$) with the genus $g$ of $C$.
Actually, we found many $\F_{p^2}$-maximal curves among enumerated superspecial Howe curves. But, as their enumeration over $\F_{p^2}$ needs more arguments,
we leave it as a future work.
In manypoints.org \cite{ManyPoints}, we
find an updated table of curves with many rational points,
including maximal curves.
\fi

\if 0
\textcolor{blue}{
In the supersingular case, the authors of \cite{KHS20} uses the fact (due to Howe \cite[Theorem 2.1]{Howe}) that there exists an isogeny of degree $2^4$ from the Jacobian of any Howe curve $H$ (the normalization of $E_1\times_{\bbP^1} E_2$) to the product of $E_1$, $E_2$ and the Jacobian of a curve $C$ of genus $2$.
This implies the supersingularity (resp.\ superspeciality) of $H$ is equivalent to that of $E_1$, $E_2$ and $C$ if $p\ne 2$.
Based on this fact, they first chose supersingular $E_1$ and $E_2$, and then reduced the (non-)existence of a supersingular $H$ into that of a certain zero over {\it an algebraically closed field} of a multivariate system derived from the supersingularity of $C$.
}

\textcolor{blue}{
To prove Theorems \ref{thm:main1} and \ref{thm:main2}, we first reduce Problem \ref{prob:sspHowe} for a given $p$ into the following computational problem:
Given a pair $(E_1, E_2)$ of supersingular elliptic curves, determine the (non-)existence of a certain zero $(\lambda, \mu, \nu) \in (\mathbb{F}_{p^2})^3$ of a multivariate system
 $(\lambda, \mu, \nu)$
$\mathbb{F}_{p^2}$ 
The reduction for the supersingular case~\cite{KHS20} is also applicable for the superspecial case, but the search space is quite large.
We first discuss the field of definition of superspecial Howe curves (cf.\;Proposition \ref{prop:field_of_def}), which enables us to reduce the amount of our search drastically.
Specifically, the coordinates of $(A_1, B_1, A_2, B_2, \lambda, \mu, \nu)$ belong to $\mathbb{F}_{p^2}$, whereas those in the supersingular case~\cite{KHS20} can belong to larger extension fields in the algebraic closure $\overline{\mathbb{F}_{p}}$.}

\textcolor{blue}{
Once the search space is reduced from a product of $\overline{\mathbb{F}_{p}}$ to that of $\mathbb{F}_{p^2}$, it suffices to determine the (non-)existence of a certain zero over $\mathbb{F}_{p^2}$ of a multivariate system derived from the superspeciality of $C$, i.e., its Cartier-Manin matrix is zero.
For this, we develop an algorithm where a sub-procedure computes $(\lambda, \mu)$ as a zero of a multivariate system with two variables.
Since the number of variables is two ($\lambda$ and $\mu$), this sub-procedure has the following three variants:
\begin{enumerate}
\item[(i)] Use the brute-force on $(\lambda, \mu) \in (\mathbb{F}_{p^2})^2$.
\item[(ii)] Conduct the brute-force only on $\lambda$ (resp.\ $\mu$), and compute $\mu$ (resp.\ $\lambda$) as a root of a univariate polynomial with the gcd computation.
\item[(iii)] Regarding both $\lambda$ and $\mu$ as variables, use an approach based on resultants.
\end{enumerate}
Analyzing the complexity of each variant, we determine the fastest one.
These analysis has not been done in our previous works.
}
\fi
\if 0
for 
the superspeciality of $H$ is equivalent to that of $E_1$, $E_2$ and $C$ if $p\ne 2$.
As we choose supersingular elliptic curves as $E_1$ and $E_2$, it suffices to check whether $C$ is superspecial. 
The superspeciality of $C$ is determined by computing its Cartier-Manin matrix (cf.\;Lemma \ref{lem:ssp}).

For this reduction, we first discuss the field of definition of superspecial Howe curves (cf.\;Proposition \ref{prop:field_of_def}), which enables us to reduce the amount of our search drastically.
For the superspeciality-test, we use the fact (due to Howe \cite[Theorem 2.1]{Howe}) 
that there exists an isogeny of degree $2^4$ from the Jacobian of any Howe curve $H$ (the normalization of $E_1\times_{\bbP^1} E_2$) to the product of $E_1$, $E_2$ and the Jacobian of a curve $C$ of genus $2$. 
This implies the superspeciality of $H$ is equivalent to that of $E_1$, $E_2$ and $C$ if $p\ne 2$.
As we choose supersingular elliptic curves as $E_1$ and $E_2$, it suffices to check whether $C$ is superspecial. 
The superspeciality of $C$ is determined by computing its Cartier-Manin matrix (cf.\;Lemma \ref{lem:ssp}).
This is much more efficient than checking it for a general canonical curve.
\fi

\paragraph{\bf Organization.}
The organization of this paper is as follows.
In Section \ref{sec:Howe}, we start with recalling basic facts on Howe curves and explain the superspeciality-test for Howe curves.
In Section \ref{sec:IsomTest}, we introduce a new method to classify isomorphism classes of superspecial Howe curves, after we describe a way to enumerate certain elliptic quotients of a given Howe curve.
In Section \ref{sec:algorithm}, we give the two main algorithms and state the main results.
The details of the algorithms and some complexity analyses are given in Section \ref{sec:details}.
Section \ref{sec:conc} is a concluding remark.

\section{Howe curves and their superspeciality}\label{sec:Howe}
In this section, we recall the definition of Howe curves
and properties of these curves, and study
the superspeciality of them.
In particular, we show in Proposition \ref{prop:field_of_def} that the field of definition of superspecial Howe curves is $\mathbb{F}_{p^2}$.

\subsection{Definition and a realization}\label{subsec:def}
A {\it Howe curve} is a curve which is isomorphic to the desingularization of
the fiber product $E_1 \times_{{\bbP}^1} E_2$ of two double covers
$E_i\to {\bbP}^1$ ramified over $S_i$, where
$S_i$ consists of 4 points and $|S_1\cap S_2|=1$ holds.

We assume $p>3$, since there is no superspecial curve of genus $4$
if $p\le 3$ (cf.\;\cite[Theorem 1.1]{Ekedahl}).
The following realization of Howe curves seems to work well
in both of theoretical and computational viewpoints.
Let $K$ be an algebraically closed field in characteristic $p$.
Let $y^2 = x^3 + A_i x + B_i$ ($i=1,2$)
be two (nonsingular) elliptic curves, where $A_1,B_1,A_2,B_2\in K$.
Let $\lambda$, $\mu$ and $\nu$ be elements of $K$ and set
\begin{eqnarray}
f_1(x) &=& x^3 + A_1 \mu^2 x + B_1\mu^3,\label{f1}\\
f_2(x) &=& (x-\lambda)^3 + A_2 \nu^2(x-\lambda) + B_2 \nu^3.\label{f2}
\end{eqnarray}
The two elliptic curves
\begin{eqnarray}
E_1:& & z^2y = f_1^{(\rm h)}(x,y) := x^3 + A_1 \mu^2 xy^2 + B_1\mu^3y^3,\\
E_2:& & w^2y = f_2^{(\rm h)}(x,y) 
:=(x-\lambda y)^3 + A_2 \nu^2(x-\lambda y)y^2 + B_2 \nu^3y^3
\end{eqnarray}
are considered as  the double covers 
\begin{equation*}
\pi_i : E_i \to {\bbP}^1=\Proj K[x,y].
\end{equation*}
Note that the isomorphism classes of $E_1$ and $E_2$ are independent of the choice of $(\lambda,\mu,\nu)$ provided $\mu\ne 0$ and $\nu\ne 0$.
We say that $(\lambda,\mu,\nu)$ is {\it of Howe type} if
\begin{enumerate}
\item[(i)] $\mu\ne 0$ and $\nu\ne 0$;
\item[(ii)] $f_1$ and $f_2$ are coprime.
\end{enumerate}
If $(\lambda,\mu,\nu)$ is of Howe type, then
the desingularization $H$ of the fiber product $E_1\times_{{\bbP}^1}E_2$
is a Howe curve, since $E_i\to{\bbP}^1$ is ramified over the set consisting of 4 points, say $S_i$,
and $S_1\cap S_2 = \{(1:0)\}$.

\subsection{Howe curves are canonical}\label{subsec:can}
We show that any Howe curve is a canonical curve of genus $4$.
Note that a curve of genus $4$ is either hyperelliptic or canonical.

\begin{proposition}\label{HoweIsCanonical}
A Howe curve is a canonical curve of genus $4$.
\end{proposition}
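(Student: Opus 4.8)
The plan is to establish the two assertions of the statement separately: first that $g(H)=4$, and then that $H$ is not hyperelliptic. Since, as noted just above, a genus-$4$ curve is either hyperelliptic or canonical, these two facts together prove the proposition.

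For the genus I would exploit that, by construction, $H$ carries a faithful action of a group $G\cong(\Z/2\Z)^2$ with $H/G\cong\bbP^1$, the map $H\to\bbP^1$ being the degree-$4$ cover induced by the two double covers $E_i\to\bbP^1$. Two of the three intermediate double covers $H/\langle\sigma\rangle$ are exactly $E_1$ and $E_2$, and the third involution $\sigma_3\in G$ yields a third double cover $C\to\bbP^1$. Because $p>3$ the cover $H\to\bbP^1$ is tame, so every inertia group is cyclic, hence of order $\le 2$ inside $(\Z/2\Z)^2$; consequently over each branch point exactly two of the three double covers $E_1,E_2,C$ ramify. As $|S_1|=|S_2|=4$ and $|S_1\cap S_2|=1$, this forces $C$ to be branched over the $6$ points $(S_1\setminus S_2)\cup(S_2\setminus S_1)$, so $g(C)=2$ by Riemann--Hurwitz, while $H\to\bbP^1$ is branched over the $7$ points of $S_1\cup S_2$, each carrying two points of ramification index $2$. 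Riemann--Hurwitz then gives $2g(H)-2 = 4\cdot(-2)+7\cdot 2 = 6$, i.e.\ $g(H)=4$. (Alternatively one could read off $g(H)=1+1+2$ directly from Howe's isogeny $J(H)\sim E_1\times E_2\times J(C)$ recalled in the introduction.)

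To show $H$ is not hyperelliptic I would argue by contradiction. Assume $H$ is hyperelliptic and let $\iota$ be its hyperelliptic involution; recall that for genus $\ge 2$ this $\iota$ is unique and central in $\Aut(H)$, and that $g(H/\iota)=0$. The three nontrivial elements of $G$ have quotients $E_1,E_2,C$ of genera $1,1,2$, none equal to $0$, so $\iota\notin G$. Since $\iota$ is central and of order $2$, the group $\widetilde G:=\langle G,\iota\rangle$ is isomorphic to $(\Z/2\Z)^3$ and acts faithfully on $H$. As $\iota$ is central, $G$ descends to $H/\iota\cong\bbP^1$, whence $H/\widetilde G\cong\bbP^1$. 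Now $H\to H/\widetilde G$ is a tame (because $p\ne 2$) Galois cover of $\bbP^1$ of degree $8$ with group $(\Z/2\Z)^3$; every ramified point again has cyclic inertia of order $2$, so each branch point contributes $8/2=4$ to the ramification term. Riemann--Hurwitz gives $2g(H)-2 = 8\cdot(-2)+4r$ for the number $r$ of branch points, forcing $2g(H)-2\equiv 0 \pmod 4$, i.e.\ $g(H)$ odd. This contradicts $g(H)=4$, so $H$ cannot be hyperelliptic and is therefore canonical.

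The routine parts are the two Riemann--Hurwitz computations and the combinatorics of the branch loci. The main obstacle is the non-hyperelliptic step: one must correctly pin down the group-theoretic picture --- that the hyperelliptic involution is central and does not lie in $G$, so that $\langle G,\iota\rangle$ genuinely is an elementary abelian group of order $8$ acting with rational quotient --- and then invoke the parity obstruction coming from tameness at $p>3$. The decisive structural input throughout is that all inertia groups have order $2$, which is exactly what validates the counts $7\cdot 2$ and $4r$ and produces the parity contradiction.
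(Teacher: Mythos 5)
Your proof is correct, but it follows a genuinely different route from the paper's. The paper argues by explicit construction: it writes down the curve $H'\subset\bbP^3$ cut out by the quadric \eqref{Howe_quadratic_form} and the cubic \eqref{Howe_cubic_form}, observes that $H'$ is nonsingular when $(\lambda,\mu,\nu)$ is of Howe type and agrees with $E_1\times_{\bbP^1}E_2$ away from $y=0$, hence is isomorphic to the normalization $H$, and then invokes the classical fact (\cite[Chap.\,IV, Exam.\,5.2.2]{Har}) that a nonsingular intersection of a quadric and a cubic in $\bbP^3$ is a canonical curve of genus $4$. You instead work intrinsically: Riemann--Hurwitz for the tame $(\Z/2\Z)^2$-cover $H\to\bbP^1$ gives $g(H)=4$, and your branch-point bookkeeping (inertia groups of order $2$, exactly two of $E_1$, $E_2$, $C$ ramified over each of the $7$ branch points, so $C$ has genus $2$) is accurate; then the parity obstruction $2g(H)-2\equiv 0\pmod 4$ for a tame $(\Z/2\Z)^3$-cover correctly rules out hyperellipticity, using that the hyperelliptic involution is unique and central and cannot lie in $G$ because no quotient $E_1$, $E_2$, $C$ has genus $0$. (Note the contradiction persists whatever the genus of $H/\widetilde G$ is, so your identification $H/\widetilde G\cong\bbP^1$, while correct, is not even needed.) In essence you have reproved the known fact that a bielliptic curve of genus $4$ cannot be hyperelliptic. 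What each approach buys: yours is coordinate-free, needs no smoothness verification of explicit equations, and transfers to other fiber-product constructions; the paper's, besides being shorter, produces the explicit canonical model $V(Q,P)$, which is not a luxury in this paper --- the isomorphism tests of Section \ref{sec:IsomTest} and the algorithms of Sections \ref{sec:algorithm} and \ref{sec:details} operate directly on that model, so the constructive proof is needed downstream anyway.
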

\begin{proof}
Let $H$ be a Howe curve, i.e.,
the normalization of the fiber product over $\bbP^1$ of $E_1$ and $E_2$ as above.
Let $H'$ be the curve defined in $\bbP^3=\Proj k[x,y,z,w]$ by
\begin{eqnarray}
z^2-w^2 &=& q(x,y),\label{Howe_quadratic_form}\\
z^2y &=& x^3 + A_1 \mu^2 xy^2 + B_1\mu^3y^3\label{Howe_cubic_form},
\end{eqnarray}
where $q(x,y)$ is the quadratic form
\begin{equation}\label{eq:q}
q(x,y) = (f_1^{(\rm h)}(x,y)-f_2^{(\rm h)}(x,y))/y.
\end{equation}
Note that $H'$ and $E_1\times_{\bbP^1} E_2$ are isomorphic if
the locus $y=0$ is excluded.
It is straightforward to see that $H'$ is nonsingular
(if $(\lambda,\mu,\nu)$ is of Howe type).
Hence $H$ and $H'$ are isomorphic (cf.\,\cite[Chap.\,II, Prop.\,2.1]{Sil}).

It is well-known that any nonsingular curve defined by
a quadratic form and a cubic form in $\bbP^3$
is a canonical curve of genus $4$ (cf.\,\cite[Chap.\,IV, Exam.\,5.2.2]{Har}).
\end{proof}

\subsection{Superspeciality}\label{subsec:ssp}
Suppose that $(\lambda,\mu,\nu)$ is of Howe type.
Put
\begin{equation}
f(x) = f_1(x)f_2(x)
\end{equation}
and consider the hyperelliptic curve $C$ of genus $2$ defined by
\begin{equation*}
C: u^2 = f(x).
\end{equation*}
In \cite[Theorem 2.1]{Howe}, Howe proved  that there exist two isogenies
\begin{eqnarray*}
\varphi: J(H) \longrightarrow E_1 \times E_2 \times J(C),\\
\psi: E_1 \times E_2 \times J(C) \longrightarrow J(H)
\end{eqnarray*}
such that $\varphi\circ\psi$ and $\psi\circ\varphi$ are the multiplication by $2$, see \cite[Theorem C]{KR} for a result in more general cases.
If $p$ is odd, then
$\varphi\circ\psi$ and $\psi\circ\varphi$ are isomorphisms between the $p$-kernels of $J(H)$ and $E_1 \times E_2 \times J(C)$, whence
$J(H)[p]$ and $E_1[p] \times E_2[p] \times J(C)[p]$ are isomorphic, see \cite[Corollary 2]{GP} for a more general result.
Hence $H$ is superspecial if and only if $E_1$ and $E_2$ are supersingular and $C$ is superspecial. 

Now we recall a criterion for the superspeciality of $C$.
Let $\gamma_i$ be the $x^i$-coefficient of $f(x)^{(p-1)/2}$.
Put
\[
a=\gamma_{p-1},\quad
b=\gamma_{2p-1},\quad
c=\gamma_{p-2}\quad \text{and}\quad
d=\gamma_{2p-2}.
\]
Note that $\gamma_i$ and especially $a,b,c$ and $d$
are homogeneous as polynomials in $\lambda$, $\mu$ and $\nu$. 
Let $M$ be the Cartier-Manin matrix of $C$,
that is a matrix representing the Cartier operator
on $H^0(C,\Omega^1)$.
It is known (cf.\ \cite[4.1]{Gonzalez} and \cite[\S 2]{Yui}) that the Cartier-Manin matrix of $C$ is given by
\begin{equation}\label{Cartier-Manin matrix}
M := \begin{pmatrix}
a & b \\
c & d
\end{pmatrix}.
\end{equation}
\begin{lemma}\label{lem:ssp}
Let $H$ be a Howe curve as above.
Then $H$ is superspecial if and only if $E_1$ and $E_2$ are supersingular and $a=b=c=d=0$.
\end{lemma}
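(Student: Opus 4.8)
The plan is to reduce the superspeciality of the Howe curve $H$ to the simultaneous superspeciality of the three factors $E_1$, $E_2$ and $C$ via Howe's isogeny decomposition, and then to translate the superspeciality of the genus-$2$ piece $C$ into the vanishing of the four displayed entries of its Cartier--Manin matrix. Concretely, I would first invoke the isogenies $\varphi\colon J(H)\to E_1\times E_2\times J(C)$ and $\psi\colon E_1\times E_2\times J(C)\to J(H)$ from \cite[Theorem 2.1]{Howe}, whose composites are multiplication by $2$. Since we are in odd characteristic, multiplication by $2$ is an isomorphism on the $p$-kernels, so $\varphi$ and $\psi$ induce mutually inverse isomorphisms on the $p$-torsion group schemes. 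Hence $J(H)[p]\cong E_1[p]\times E_2[p]\times J(C)[p]$ as group schemes.

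The next step is to recall that superspeciality of an abelian variety is detected by its $p$-kernel: an abelian variety $A$ over $K$ is superspecial precisely when $A[p]\cong (\alpha_p\times\mu_p\times\Z/p\Z)^{\dim A}$ in the appropriate sense, equivalently when the Frobenius acts as zero on $H^1(A,\mathcal{O}_A)$. Because superspeciality is a condition on the $p$-kernel, and because $p$-kernels of products are products of $p$-kernels, the isomorphism above shows that $J(H)$ is superspecial if and only if each of $E_1$, $E_2$ and $J(C)$ is superspecial. For the elliptic curves $E_1,E_2$, superspeciality coincides with supersingularity, and for the genus-$2$ Jacobian $J(C)$, superspeciality of $J(C)$ is by definition superspeciality of the curve $C$. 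This gives the equivalence that $H$ is superspecial if and only if $E_1$ and $E_2$ are supersingular and $C$ is superspecial.

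Finally I would use the standard criterion that a curve of genus $2$ is superspecial exactly when its Cartier--Manin matrix vanishes identically. This is because, for a curve $C$, superspeciality is equivalent to the Cartier operator (equivalently the Hasse--Witt/Frobenius operator) being the zero map on the relevant one-dimensional cohomology, which is recorded by the Cartier--Manin matrix $M$. Using the explicit description of $M$ recalled in \eqref{Cartier-Manin matrix}, with entries $a=\gamma_{p-1}$, $b=\gamma_{2p-1}$, $c=\gamma_{p-2}$ and $d=\gamma_{2p-2}$ extracted from the coefficients of $f(x)^{(p-1)/2}$ (the formula of \cite[4.1]{Gonzalez}, \cite[\S 2]{Yui}), the superspeciality of $C$ becomes the system $a=b=c=d=0$. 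Chaining this with the previous equivalence yields the lemma.

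The only genuinely delicate point is the passage from the degree-$2^4$ isogeny to an honest isomorphism of $p$-kernels. The hard part will be justifying that, in odd characteristic, the multiplication-by-$2$ maps $\varphi\circ\psi$ and $\psi\circ\varphi$ restrict to isomorphisms on the $p$-torsion group schemes, so that superspeciality transfers across the isogeny; everything else is a routine assembly of the product decomposition of $p$-kernels together with the known Cartier--Manin criterion. Fortunately this transfer is exactly the content of \cite[Corollary 2]{GP}, which I would cite to conclude that $J(H)[p]\cong E_1[p]\times E_2[p]\times J(C)[p]$, so that no further computation with the group schemes is needed.
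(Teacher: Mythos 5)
Your proposal is correct and follows essentially the same route as the paper: Howe's isogenies $\varphi,\psi$ with composites equal to multiplication by $2$, the resulting isomorphism $J(H)[p]\cong E_1[p]\times E_2[p]\times J(C)[p]$ in odd characteristic via \cite[Corollary 2]{GP}, the reduction of superspeciality of $H$ to supersingularity of $E_1,E_2$ plus superspeciality of $C$, and finally the Cartier--Manin criterion of \cite[4.1]{Gonzalez} and \cite[\S 2]{Yui} giving $a=b=c=d=0$.

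One minor correction that does not affect the argument: your parenthetical claim that superspeciality means $A[p]\cong(\alpha_p\times\mu_p\times\Z/p\Z)^{\dim A}$ is false --- that group scheme has the wrong order, $\mu_p\times\Z/p\Z$ is the $p$-kernel of an \emph{ordinary} elliptic curve, and for supersingular $E$ the $p$-kernel $E[p]$ is the unique non-split self-dual extension of $\alpha_p$ by $\alpha_p$ --- but since you immediately substitute the correct criterion (Frobenius acting as zero on $H^1(A,\mathcal{O}_A)$, equivalently $a$-number equal to $\dim A$, which indeed depends only on the group scheme $A[p]$), the transfer of superspeciality across the $p$-kernel isomorphism goes through exactly as in the paper.
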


The next lemma will be used in Subsection \ref{subsec:complexity} to estimate the computational complexity of our algorithm. The proof will be given in the full paper.
\begin{lemma}\label{lem:deg}
If $E_1$ and $E_2$ are supersingular, then
\begin{itemize}
\item[(1)] $\mathrm{deg}_{\lambda} (a)$, $\mathrm{deg}_{\mu} (a)$ and $\mathrm{deg}_{\nu} (a)$ are at most $(3p-5)/2$;
\item[(2)] $\mathrm{deg}_{\lambda} (b) \le (p-5)/2$, and
$\mathrm{deg}_{\mu} (b)$ and $\mathrm{deg}_{\nu} (b)$ are at most $p-2$;
\item[(3)] $\mathrm{deg}_{\lambda} (c)$, $\mathrm{deg}_{\mu} (c)$ and $\mathrm{deg}_{\nu} (c)$ are at most $(3p-3)/2$;
\item[(4)] $\mathrm{deg}_{\lambda} (d)\le (p-3)/2$, and $\mathrm{deg}_{\mu} (d)$ and $\mathrm{deg}_{\nu} (d)$ are at most $p-1$.
\end{itemize}
\end{lemma}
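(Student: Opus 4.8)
The plan is to reduce all four coefficients to the monomial structure of $f^n=f_1^nf_2^n$, where $n=(p-1)/2$, read off the separate degrees in $\lambda$, $\mu$, $\nu$ from an explicit expansion, and then use supersingularity together with Lucas' theorem to discard exactly the terms that would otherwise break the bounds.

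First I would record two scaling facts. Writing $g_i(t)=t^3+A_it+B_i$, we have $f_1(x)=\mu^3 g_1(x/\mu)$ and $f_2(x)=\nu^3 g_2((x-\lambda)/\nu)$, so $f^n$ is homogeneous of degree $6n$ in $(x,\lambda,\mu,\nu)$ with all weights $1$; hence $\gamma_i$ is homogeneous of degree $6n-i$ in $(\lambda,\mu,\nu)$. This already pins down the total degrees $2p-2,\,p-2,\,2p-1,\,p-1$ of $a,b,c,d$, and makes part (3) immediate since $\deg_\lambda f^n=\deg_\mu f^n=\deg_\nu f^n=3n=(3p-3)/2$. Writing $c^{(i)}_k$ for the coefficient of $t^k$ in $g_i(t)^n$, the scaling identities give that the coefficient of $x^j$ in $f_1^n$ is $c^{(1)}_j\mu^{3n-j}$ and that $f_2^n=\sum_k c^{(2)}_k\nu^{3n-k}(x-\lambda)^k$. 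Expanding the latter and collecting yields
\[
\gamma_i=\sum_{j} c^{(1)}_j\,\mu^{3n-j}\,Q_{i-j},\qquad Q_m=\sum_{k\ge m} c^{(2)}_k\binom{k}{m}(-\lambda)^{k-m}\nu^{3n-k},
\]
with $Q_m=0$ unless $0\le m\le 3n$. Because distinct $j$ produce distinct powers of $\mu$, there is no cancellation across $j$, so $\deg_\mu\gamma_i=3n-\min\{\,j:c^{(1)}_j\ne0,\ Q_{i-j}\ne0\,\}$, while $\deg_\lambda\gamma_i$ and $\deg_\nu\gamma_i$ equal the maxima of $\deg_\lambda Q_{i-j}$ and $\deg_\nu Q_{i-j}$ over the same index set.

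The two inputs that sharpen the trivial bounds are: (i) supersingularity of $E_i$, which is exactly the vanishing of the Hasse invariant, $c^{(i)}_{p-1}=c^{(i)}_{2n}=0$; and (ii) Lucas' theorem controlling $\binom{k}{m}$ modulo $p$ in the range $0\le m,k\le 3n<2p$. With these I would run the case analysis. For the $\mu$-degrees everything is governed by the smallest admissible $j$: for $b=\gamma_{4n+1}$ and $d=\gamma_{4n}$ the constraint $i-j\le 3n$ alone forces $j\ge n+1$ and $j\ge n$, giving $\le p-2$ and $\le p-1$; for $a=\gamma_{2n}$ one needs $Q_{2n}=Q_{p-1}=0$, which holds because the $k=2n$ term dies by (i) while every $k$ with $2n<k\le 3n$ has $\binom{k}{2n}\equiv0\pmod p$ by (ii), forcing $j\ge1$ and hence $\le(3p-5)/2$. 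For the $\nu$-degrees one uses $\deg_\nu Q_m\le 3n-m$ and (i) to exclude the single offending index; for $a$ this removes $j=2n$ via $c^{(1)}_{2n}=0$. The delicate cases are the $\lambda$-degrees of $b$ and $d$, where $\deg_\lambda Q_m=k_{\max}(m)-m$ with $k_{\max}(m)$ the largest $k\le 3n$ satisfying $c^{(2)}_k\binom{k}{m}\ne0$. Here Lucas forces $k\le p-1=2n$ when $m\le 2n$, and $k=p+k_0$ with $k_0\le n-1$ when $m\ge p$; in the first regime supersingularity removes $k=2n$, and in the second the only value attaining $n-1$, namely $m=2n+1$, corresponds to $j=2n$ and is removed by $c^{(1)}_{2n}=0$ (for $b$; for $d$ this value is admissible and gives the bound $(p-3)/2$ with equality). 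Tracking these exclusions gives $\deg_\lambda b\le(p-5)/2$ and $\deg_\lambda d\le(p-3)/2$.

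The main obstacle is precisely this last bookkeeping: one must verify, uniformly over the relevant ranges of $m$, which $k$ survive both the Lucas condition and the supersingularity condition, and confirm that the unique indices that would push a degree one unit too high are always the ones killed by $c^{(i)}_{2n}=0$. The base-$p$ digit analysis of $\binom{k}{m}$ is routine once $k<2p$ is exploited, but it is easy to be off by one, so I would organize the argument by first listing, for each of $a,b,c,d$, the admissible pairs $(j,\,i-j)$, and only afterwards applying (i) and (ii).
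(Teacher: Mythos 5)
The paper gives no proof of this lemma to compare against: it states only that ``the proof will be given in the full paper.'' So your argument has to stand on its own, and having checked it, it does. Your normal form $\gamma_i=\sum_j c^{(1)}_j\mu^{3n-j}Q_{i-j}$ with $Q_m=\sum_{k\ge m}c^{(2)}_k\binom{k}{m}(-\lambda)^{k-m}\nu^{3n-k}$ (where $n=(p-1)/2$) is exactly the right device: the $\mu$-exponent pins down $j$ and the pair of $(\lambda,\nu)$-exponents pins down $k$, so there is no cancellation and all three partial degrees are read off exactly; the only two inputs needed are the Hasse-invariant criterion $c^{(i)}_{2n}=0$ for supersingularity and Lucas' theorem for $\binom{k}{m}$ with $k<2p$. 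I verified the delicate cases: for $\deg_\lambda b$, in the regime $n+1\le m\le 2n$ Lucas kills all $k\ge p$ and supersingularity of $E_2$ kills $k=2n$, giving $\deg_\lambda Q_m\le 2n-1-m\le n-2$; in the regime $m=p+m_0$ one gets $\deg_\lambda Q_m\le n-1-m_0$, and the unique index attaining $n-1$ (that is $m=2n+1$, i.e.\ $j=2n$) is removed by $c^{(1)}_{2n}=0$; the parallel computation for $d$ tops out at $n-1=(p-3)/2$ since the $m=2n+1$ term is then admissible. Two small expository gaps to close in a final write-up, neither affecting correctness: (a) you never explicitly treat $\deg_\lambda a$ --- it follows by the same exclusion of $j=2n$ that you invoke for $\deg_\nu a$, because $\deg_\lambda Q_m\le 3n-m$ holds just as $\deg_\nu Q_m\le 3n-m$ does, but this should be said; (b) the homogeneity statement fixing the total degrees should be phrased via the polynomial identities $f_1^n=\sum_j c^{(1)}_j x^j\mu^{3n-j}$ and $f_2^n=\sum_k c^{(2)}_k(x-\lambda)^k\nu^{3n-k}$ rather than via substitution of $x/\mu$ and $(x-\lambda)/\nu$, so that nothing needs to be inverted.
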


Finally we discuss the field of definition of superspecial Howe curves.
\begin{proposition}\label{prop:field_of_def}
Let $K$ be an algebraically closed field in characteristic $p > 0$.
Any superspecial Howe curve $K$ is isomorphic to
$H$ obtained as above for $A_1$, $B_1$, $A_2$, $B_2$, $\mu$, $\nu$ and $\lambda$ belonging to $\F_{p^2}$.
\end{proposition}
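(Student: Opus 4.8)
The plan is to start from a superspecial Howe curve $H$ over $K=\overline{\F_p}$ and to produce an $\F_{p^2}$-rational choice of $(A_1,B_1,A_2,B_2,\lambda,\mu,\nu)$ reproducing $H$ up to $K$-isomorphism, treating the three ingredients of the realization of Subsection \ref{subsec:def} in turn. By Lemma \ref{lem:ssp}, superspeciality of $H$ forces the two elliptic quotients $E_1,E_2$ to be supersingular. Every supersingular elliptic curve over $K$ has $j$-invariant in $\F_{p^2}$, hence a model $y^2=x^3+A_ix+B_i$ with $A_i,B_i\in\F_{p^2}$; this already fixes $A_1,B_1,A_2,B_2\in\F_{p^2}$, so only the relative-position parameters $\lambda,\mu,\nu$ remain to be controlled.

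Next I would show that $H$ itself descends to $\F_{p^2}$. Since $H$ is superspecial, $J(H)$ is isomorphic as a principally polarized abelian variety to a product of supersingular elliptic curves, and such a p.p.a.v.\ is defined over $\F_{p^2}$ and satisfies $J(H)^{(p^2)}\cong J(H)$ (the analogous statement for the genus-$2$ quotient $C$ is \cite{IKO}). By the Torelli theorem the field of moduli of $H$ lies in $\F_{p^2}$, and because $\mathrm{Br}(\F_{p^2})=0$ the field of moduli is a field of definition; hence $H$ has a model $H_0$ over $\F_{p^2}$. The degree-$4$ cover $H_0\to\bbP^1$ and its seven branch points then form a $\Gal(K/\F_{p^2})$-stable configuration, and the distinguished common ramification point $S_1\cap S_2$, being intrinsic to $H$, is $\F_{p^2}$-rational.

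With this in hand I would read off $\lambda,\mu,\nu$. Moving the common point to $(1:0)$ by an $\F_{p^2}$-rational element of $\PGL_2$, the two remaining branch triples become the root sets of two monic cubics. The root sets of \eqref{f1} and \eqref{f2} are $\mu\cdot\{\text{roots of }x^3+A_1x+B_1\}$ and $\lambda+\nu\cdot\{\text{roots of }x^3+A_2x+B_2\}$, so matching each triple against the $2$-torsion abscissae of the corresponding standard model determines $\lambda,\mu,\nu$; once both triples and both $E_i$ are defined over $\F_{p^2}$, the relations $A_1\mu^2,B_1\mu^3\in\F_{p^2}$ give $\mu\in\F_{p^2}$ and, similarly, $\nu,\lambda\in\F_{p^2}$.

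The main obstacle is the hypothesis feeding the last step: that the two branch triples---equivalently the two elliptic pencils, i.e.\ the two rulings of the canonical quadric through $H_0$ (cf.\ Proposition \ref{HoweIsCanonical})---are each defined over $\F_{p^2}$ rather than interchanged by $\Gal(\F_{p^4}/\F_{p^2})$, the latter being exactly the non-split smooth quadric case. Since only a $K$-isomorphic model is needed, I would replace $H_0$ by a quadratic twist and check that some twist splits the quadric; that an interchange forces $j(E_1)^{p^2}=j(E_2)$ and hence $j(E_1)=j(E_2)$ (both supersingular) shows the two quotients are already $K$-isomorphic, supplying the symmetry one expects to exploit for this descent. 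A minor separate point is the extraction of $\mu,\nu$ when some $A_i$ or $B_i$ vanishes (the cases $j\in\{0,1728\}$), handled using the extra automorphisms of $E_i$ and the fact that $\F_{p^2}^{\times}$ contains the $6$th roots of unity.
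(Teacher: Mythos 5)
Your outer scaffolding (supersingularity forces $A_i,B_i\in\F_{p^2}$; descend $H$ to $\F_{p^2}$; make the branch data rational; extract $\lambda,\mu,\nu$ from the two cubics) matches the paper's, and your final extraction step is correct — including the observation that $A_1\mu^2,B_1\mu^3\in\F_{p^2}$ with $A_1B_1\neq 0$ forces $\mu\in\F_{p^2}$, and the $j\in\{0,1728\}$ cases, which can in fact be handled even more simply by absorbing $\mu^3$ (resp.\ $\mu^2$) into $B_1$ (resp.\ $A_1$). The genuine gap is in the middle. Your descent — field of moduli equals field of definition, via Torelli and vanishing of the cohomological obstruction over a finite field — produces \emph{some} model $H_0$ over $\F_{p^2}$, but gives no control over how the $p^2$-Frobenius acts on $\Aut(H_{0,\overline{\F}_p})$. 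Hence your assertion that ``the degree-$4$ cover $H_0\to\bbP^1$ and its seven branch points form a Galois-stable configuration'' is unjustified: that cover is the quotient by a \emph{chosen} Klein four-group $\langle\iota_1,\iota_2\rangle\subset\Aut(H_{0,\overline{\F}_p})$, and Frobenius may carry this subgroup to an entirely different one — a Howe curve can admit many elliptic involutions (cf.\ Remark~\ref{rem:EQ}; the characteristic-$5$ curve has $\#{\rm EQ}(H)=10$) — or preserve it while swapping $\iota_1$ and $\iota_2$. Everything after that sentence rests on this stability, so the proof does not close.

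Your proposed patch for the swapping case is also not sound, for two reasons. First, the two elliptic pencils are \emph{not} the two rulings of the canonical quadric: a line of a ruling meets the curve in $3$ points (the rulings cut out the trigonal $g^1_3$'s), whereas the elliptic quotients arise from the degree-$2$ projections parameterized by ${\rm EQ}(H)$; so ``some twist splits the quadric'' is not the relevant condition. Second, to twist away a swap you need an automorphism $\alpha$ of $H_{0,\overline{\F}_p}$ with $\alpha\iota_1\alpha^{-1}=\iota_2$, equivalently an element of $\PGL_2$ interchanging $S_1$ and $S_2$; the abstract isomorphism $E_1\cong E_2$ that you correctly deduce from $j(E_1)^{p^2}=j(E_2)$ produces no such automorphism, since projective equivalence of the two quadruples $S_1,S_2$ does not mean a single M\"obius transformation swaps them. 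The paper resolves all of this in one stroke with the stronger descent from the proof of \cite[Theorem 1.1]{Ekedahl}: a superspecial curve has a model over $\F_{p^2}$ on which Frobenius acts on $\Jac(H)$ as $\pm p$, hence centrally, so \emph{every} automorphism — in particular $\iota_1,\iota_2$ — is $\F_{p^2}$-rational; then $E_i$, the quotient $\bbP^1$, and the common branch point are defined over $\F_{p^2}$, and $F(P)=\pm pP=P$ makes every remaining branch point (a $2$-torsion point of $E_i$) rational, after which the extraction of $\lambda,\mu,\nu$ goes through. To salvage your route you would need this control of the Frobenius action on $\Aut$ (or on the Howe cover data); the soft ``field of moduli $=$ field of definition'' argument alone cannot supply it.
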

\begin{proof}
It suffices to consider the case of $K=\overline{\F_{p^2}}$,
since any supersingular elliptic curve is defined over $\F_{p^2}$
and $(\lambda,\mu,\nu)$ is a solution of $a=b=c=d=0$.
Let $H'$ be a superspecial Howe curve over $K=\overline{\F_{p^2}}$.
Choose $E'_1$ and $E'_2$ over $K$ so that $H'$ is the normalization of
$E'_1\times_{\bbP^1} E'_2$.
It is well-known that $H'$ descends to $H$ over $\F_{p^2}$
such that the Frobenius map $F$ (the $p^2$-power map) on $\Jac(H)$ is $p$ or $-p$ and all automorphisms of $H$ are defined over $\F_{p^2}$ (cf.\;the proof of \cite[Theorem 1.1]{Ekedahl}). Let $E_1$ and $E_2$ be the quotients of $H$ corresponding to $E'_1$ and $E'_2$. The quotient $E_i$ of $H$ is obtained by an involution $\iota_i \in\Aut(H)$, and therefore is defined over $\F_{p^2}$.
The quotient by the group generated by $\iota_1$ and $\iota_2$
is isomorphic to $\bbP^1$ over ${\F_{p^2}}$.
Let $S_i$ be the set of the ramified points of $E_i \to \bbP^1$.
Since $S_1\cap S_2$ consists of a single point, the point is an invariant under the action by the absolute Galois group of $\F_{p^2}$ and therefore is an $\F_{p^2}$-rational point. An element of $\PGL_2(\F_{p^2})$ sends the point to the infinity $\in\bbP^1$.
Since the Frobenius map $F$ on $E_i$ is also $\pm p$,
any remaining element $P$ of $S_i$ (a $2$-torsion point on $E_i$)
are also $\F_{p^2}$-rational by $F(P)=\pm p P = P$.
This implies the desired result.
\end{proof}

\if0
\begin{lemma}\label{RationalRamifiedPoints}
Let $K$ be an algebraically closed field in characteristic $p>0$.
Let $C$ be a superspecial hyperelliptic curve over $K$.
Then $C$ is isomorphic to $D\times_{\F_{p^2}}K$ 
for a (superspecial hyperelliptic) curve over $\F_{p^2}$ such that
the associated double cover $\pi: D\to\bbP^1$ is ramified over $\F_{p^2}$-rational points.
\end{lemma}
\begin{proof}
Let $g$ be the genus of $C$.
It is well-known that $C$ descends to $D$ over $\F_{p^2}$
such that $F^2+p=0$ on $\Jac(D)$ (cf. the proof of \cite[Theorem 1.1]{Ekedahl}).
Let $P_1,\ldots,P_{2g+2}$ be the points on $D$ over which $\pi$ is ramified.
We denote by the same symbol $P_i$ the point on $D$ over $P_i$.
We claim that $(P_i)-(P_j)$ belongs to $\Jac(D)[2]$ and is not zero if $i\ne j$. Indeed $2(P_i)-2(P_j) = {\rm div}(\varphi\circ\pi)$, where $\varphi$
is the rational function on $\bbP^1$ with unique zero at $P_i$ and unique pole at $P_j$.
It follows from Riehmann-Roch that $(P_i)-(P_j)\ne 0$ if $i\ne j$.

Let $G$ be the absolute Galois group of $\F_{p^2}$
and let $\sigma$ be the generator of $G$ given by the Frobenius map.
We claim $(P_i)-(P_j)$ is $G$-invariant. Indeed,
as $F^2=-p$ on superspecial $\Jac(D)$, we have
\[
(P_i^{\sigma^2})-(P_j^{\sigma^2}) = -p(P_i)+p(P_j)=(P_i)-(P_j),
\]
where we used that $2(P_i)-2(P_j)=0$.

Since $D$ is defined over $\F_{p^2}$, the sum $\Sigma_{i=1}^{2g+2}(P_i)$
is $G$-invariant. As $(2g+2)(P_k) + \Sigma_{i=1}^{2g+2}(P_i)=\Sigma_{i=1}^{2g+2}\{(P_i)-(P_k)\}$ for each $k$
is also $G$-invariant, we have that $P_k$ is $G$-invariant.
Hence $P_k$ is an $\F_{p^2}$-rational point.
\end{proof}
\begin{proposition}
Let $K$ be an algebraically closed field.
Any superspecial Howe curve $K$ is isomorphic to
$H$ obtained as above for $A_1, B_1, A_2, B_2, \mu, \nu, \lambda$ belonging to $\F_{p^2}$.
\end{proposition}
\begin{proof}
Let $H'$ be a supersingular Howe curve obtained from supersingular
$E'_1: z^2=f'_1(x,y)$ and $E'_2: w^2=f'_2(x,y)$ with $A'_1, B'_1, A'_2, B'_2\in K$ and $(\lambda',\mu',\nu')\in K\times (K^\times)^2$.
Let $C'$ be the genus 2 curve associated to $H'$ as above.
By Lemma \ref{RationalRamifiedPoints}, 
$C'$ descents to $C$ over $\F_{p^2}$ with rational ramified points over $\bbP^1$. Let $P_1,\ldots,P_{6}$ be the ramified points,
where we may suppose that $P_1,P_2,P_3$ correspond to the factors of $f'_1(x,y)$ and $P_4,P_5,P_6$ correspond to the factors of $f'_2(x,y)$.
Let $f_1(x,y)$ be a cubic form of $\F_{p^2}[x,y]$ so that $z^2=f_1(x,y)$
defines an elliptic curve $E_1$ where $E_1\to\bbP^1$ is ramified at $P_1,P_2,P_3,\infty$. Similarly we have $f_2(x,y)$ and $E_2$. Clearly the Howe curve obtained from $E_1$ and $E_2$ are isomorphic to $H'$ over $K$.
If necessary, we take a linear translation $x\mapsto x+ay$ ($a\in \F_{p^2}$)
so that $f_1$ becomes of the form \eqref{f1}.
\end{proof}
\fi

\section{Isomorphism-tests for Howe curves}\label{sec:IsomTest}
In this section, we present two methods to determine whether two Howe curves are isomorphic or not.
The first method is given in \cite{KH17}, and it works for canonical curves.
We recall this in Subsection \ref{IsomTestCanModel}.
In the remaining subsections,
we introduce an isomorphism-test using ramified points
of the morphisms from elliptic curves to $\bbP^1$.
For this, to each Howe curve $H$ we associate a set ${\rm EQ}(H)$ of elliptic curves $E$
possibly appearing as $E_i$ so that $H$ is the normalization of $E_1\times_{\bbP^1}E_2$.

\subsection{Isomorphism-test as canonical curves}\label{IsomTestCanModel}
Let $k$ be an algebraically closed field in characteristic $p > 0$.
As shown in Proposition \ref{HoweIsCanonical}, any Howe curve is canonical.
Here, let us give a short review of the isomorphism-test in \cite[6.1]{KH17} for canonical curves. 

Recall that any canonical curve $X$ of genus $4$ is a closed subvariety $V(Q,P)$ in $\bbP^3=\Proj k[x,y,z,w]$ defined by an irreducible cubic form $P$ and an irreducible quadratic form $Q$ (cf.\ \eqref{Howe_quadratic_form} and \eqref{Howe_cubic_form} for a Howe curve).
It is known that $Q$ is uniquely determined by $X$ up to linear coordinate-change.
Since any irreducible quadratic form $Q$ over $k$ is isomorphic to either of (Non-Dege) $xw+yz$ and (Dege) $2yw+z^2$, we assume that the quadratic form is either of these from now on.

Let $V_1:=V(Q_1,P_1)$ and $V_2:=V(Q_2,P_2)$ be two canonical curves.
If $V_1$ and $V_2$ are isomorphic, then $Q_1=Q_2$.
Note that $V_1$ and $V_2$ are isomorphic if and only if an element $g$ of the orthogonal similitude group of $Q:=Q_1=Q_2$ satisfies $g P_1 \equiv \lambda P_2 \;\modulo\; Q$ for some $\lambda\in k^\times$.
An algorithm implemented by using a Bruhat decomposition of the orthogonal similitude group and Gr\"obner basis computation is found in \cite[4.3]{KH18}.

\if0
Let us find the standard form (N1, N2 or Dege) of $z^2-w^2-q(x,y)=0$ with
\[
q(x,y)=ax^2+bxy+cy^2.
\]

\noindent (1) Firstly consider the case of $(a,c)=0$, i.e., $z^2-w^2-bxy=0$. Note $b\ne 0$.
Putting
\[
X=z-w,\quad W=z+w,\quad Y:=y,\quad Z:=-bx\quad ,
\]
we have the form of N1-type: $XW+YZ=0$.

\noindent (2) Secondly consider the case of $(a,c)\ne(0,0)$.
We may assume $a\ne 0$ without loss of generality
(exchange $x$ and $y$ if necessary).
Use
\[
q(x,y)=a\left\{\left(x+\frac{b}{2a}y\right)^2-\frac{b^2-4ac}{4a^2}y^2\right\}.
\]
\noindent (2-i) If $b^2-4ac=0$, then putting
\[
W:=z-w, \quad Y:=\frac{z+w}{-2a}, \quad Z=x+\frac{b}{2a}y,\quad
X:=y,
\]
we have the form of Dege-type: $2YW+Z^2=0$.

\noindent (2-ii) Consider the case of $b^2-4ac\ne 0$.

\noindent (2-ii-a): If $b^2-4ac$ is square, say $d^2$, then putting
\[
X:=z-w, \quad W:=\frac{z+w}{-a},\quad
Y:=x+\frac{b}{2a}y-\frac{d}{2a}y, \quad Z:=x+\frac{b}{2a}y+\frac{d}{2a}y,
\]
we have the standard form of N1-type: $XW+YZ=0$.

\noindent (2-ii-b): If $b^2-4ac$ is not square, say $\epsilon d^2$ with $\epsilon\in K^\times \setminus (K^\times)^2$, then putting
\[
X:=z-w, \quad W:=\frac{z+w}{-2a},\quad
Y:=x+\frac{b}{2a}y, \quad Z:=\frac{d}{2a}y,
\]
we have the standard form of N2-type: $2XW+Y^2-\epsilon Z^2=0$.
\fi

\subsection{Elliptic quotients of a canonical curve of genus $4$}\label{subsec:defEQ}
Let $k$ be an algebraically closed field in characteristic $p > 0$.
Let $H$ be a canonical curve of genus $4$ (not necessarily a Howe curve).
In this subsection, we classify certain dominant morphisms from $H$ to a genus-one curve of degree $2$.

Let $I$ be the ideal of $k[x,y,z,w]$ defining $H$, which is generated by a cubic form $P$ and a quadratic form $Q$.
Let $V$ be the 4-dimensional $k$-vector space generated by $x$, $y$, $z$ and $w$.
Consider a 3-dimensional subspace $U$ of $V$.
Let $\{X, Y, Z, W\}$ be a basis of $V$ such that $\{X, Y, Z\}$ is a basis of $U$.
Write $k[U]:=k[X,Y,Z]$.
If $I \cap k[U]$ is generated by a (single nonsingular) cubic form, i.e., there exist a genus-one curve $E_U=\Proj k[U]/I\cap k[U]$ and a morphism
\[
H \to E_U
\]
of degree $2$.
Note the cubic form in $I\cap k[U]$ is of the form $P-L\cdot Q$ up to scalar multiplication, where $L$ is a linear combination of $x$, $y$, $z$ and $w$.
We set
\[
{\rm EQ}(H) := \{U \subset V \mid \dim(U) =3, I\cap k[U]=(R) \text{ with } R \text{ is a cubic form}\} ,
\]
where $(R)$ denotes the ideal of $I \cap k[U]$ generated by $R$.
Here is a way to determine ${\rm EQ}(H)$ concretely.
Since $U$ is determined in $V$ by a single linear equation
\[
a_1 \xi_1+a_2 \xi_2+a_3 \xi_3+a_4 \xi_4=0
\]
for $\xi_1 x + \xi_2 y + \xi_3 z + \xi_4 w \in V$.
We write $U=V^{\perp {\bf a}}$ for ${\bf a} = (a_1,a_2,a_3,a_4)$.
\begin{enumerate}
\item[{\bf (A)}]
If $a_4 \ne 0$, then we may assume $a_4=1$.
As a basis of $V$, put $X=x-a_1w$, $Y=y-a_2w$, $Z = z-a_3 w$ and $W=w$,
where $X$, $Y$ and $Z$ span $U$.

\item[{\bf (B)}]
If $a_4 = 0$ and $a_3\ne 0$, then we may assume $a_3=1$.
As a basis of $V$, put $X=x-a_1z$, $Y=y-a_2z$, $Z=w$ and $W=z$,
where $X$, $Y$ and $Z$ span $U$.

\item[{\bf (C)}]
If $a_4 = a_3=0$ and $a_2\ne 0$, then we may assume $a_2=1$.
As a basis of $V$, put $X=x-a_1y$, $Y=z$, $Z=w$ and $W=y$,
where $X$, $Y$ and $Z$ span $U$.
\item[{\bf (D)}]
If $a_4 = a_3=a_2=0$ and $a_1\ne 0$, then we may assume $a_1=1$.
As a basis of $V$, put $X=y$, $Y=z$, $Z=w$ and $W=x$,
where $X$, $Y$, and $Z$ span $U$.
\end{enumerate}
We consider
\[
R:=P-(b_1 x+b_2 y+b_3 z+b_4 w)Q
\]
as a polynomial in $X$, $Y$, $Z$ and $W$.
If the coefficients of $R$ of the $10$ monomials containing $W$ (i.e., the coefficients of $X^2W$, $XYW$, $XZW$, $XW^2$, $Y^2W$, $YZW$, $YW^2$, $Z^2W$, $ZW^2$ and $W^3$) are all zero, then $R \in I\cap k[U]$.
By solving the 10 equations in (at most seven) variables $a_1$, $a_2$, $a_3$, $b_1$, $b_2$, $b_3$ and $b_4$, we get all $R$'s together with the change-of-basis matrices $\boldsymbol{U}_{\mathbf{a},\mathbf{b}}$ from $\{ x, y, z, w \}$ to $\{X, Y, Z, W \}$, i.e., matrices in $\mathrm{GL}_4 (k)$ satisfying $ (X, Y, Z, W) = (x, y, z, w) \cdot {}^t \boldsymbol{U}_{\mathbf{a},\mathbf{b}}$ for $\mathbf{a}:= (a_1, a_2, a_3, a_4)$ and $\mathbf{b}:= (b_1, b_2, b_3, b_4)$.
(In our computation, we represent each element $U \in \mathrm{EQ}(H)$ by such an $\boldsymbol{U}_{\mathbf{a},\mathbf{b}}$, see Algorithms \ref{alg:main2} and \ref{alg:sub1} below.)

\begin{remark}\label{rem:EQ}
For each element $U\in {\rm EQ}(H)$, we have a morphism $H \to E_U$ of degree $2$, which corresponds to an involution in $\Aut(H)$.
The cardinality of $\Aut(H)$ is finite and moreover is bounded by a constant independent of characteristic and $H$.
This is a well-known fact for curves of genus $\ge 2$.
Thus we have
\[
\#{\rm EQ}(H) \le \#\{\iota\in\Aut(H) \mid \iota^2={\rm id}_H\} < \infty.
\]
The cardinality of ${\rm EQ}(H)$ is an invariant of canonical curves of genus $4$.
It would be interesting to know the cardinalities of ${\rm EQ}(H)$ and $\{\iota\in\Aut(H) \mid \iota^2={\rm id}_H\}$.
An arithmetical method using Mass formula, etc.\ may be applicable if $H$ is superspecial.
See \cite{KHS} for an algorithm determining $\Aut(H)$.
\end{remark}
\begin{example}
Let $H$ be the unique superspecial curve of genus $4$ in characteristic $5$ (cf.\ \cite{FGT} and \cite{KH17}), which turns out to be a Howe curve.
A computation (with Magma~\cite{Magma}) says that $\#{\rm EQ}(H) = 10$ and  $\#\{\iota\in\Aut(H) \mid \iota^2={\rm id}_H\}=26$.
\end{example}

\subsection{A new isomorphism-test}\label{subsec:newIsom}
We propose a new method determining whether two Howe curves are isomorphic, where we use the ramified points of $E_i\to \bbP^1$ for a Howe curve $H$ realized as the normalization of $E_1\times_{\bbP^1} E_2$.

Let $U^{(1)}$ and $U^{(2)}$ be distinct elements of ${\rm EQ}(H)$,
say $U^{(i)}=V^{\perp {\bf a}^{(i)}}$ for ${\bf a}^{(i)} = (a_1^{(i)},a_2^{(i)},a_3^{(i)},a_4^{(i)})$.
Let $\{ X^{(i)},Y^{(i)},Z^{(i)},W^{(i)} \}$ be a basis of $V$ so that $\{X^{(i)},Y^{(i)},Z^{(i)} \}$ is a basis of $U^{(i)}$.
Let $R^{(i)}$ be the cubic obtained as above for $U^{(i)}$:
\[
R^{(i)} = P-L^{(i)}\cdot Q
\]
with
\[
L^{(i)} = b_1^{(i)}x+b_2^{(i)}y+b_3^{(i)}z+b_4^{(i)}w.
\]
Set $E^{(i)} = \Proj(k[U^{(i)}]/(R^{(i)}))$
and $\bbP^1=\Proj k[U^{(1)}\cap U^{(2)}]$.
If the natural morphisms $E^{(i)} \to \bbP^1$ are of degree $2$,
we say that $(U^{(1)},U^{(2)})$ is {\it admissible}.
Then $H$ is the normalization of $E^{(1)}\times_{\bbP^1}E^{(2)}$.

Assume that $(U^{(1)},U^{(2)})$ is admissible.
Put
\[
\underline{y}=L^{(1)}-L^{(2)}.
\]
\begin{lemma}\label{lem:y}
$\underline{y}\in U^{(1)}\cap U^{(2)}$.
\end{lemma}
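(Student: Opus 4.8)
The plan is to work in coordinates adapted to the two quotients and to read off the membership from the identity $\underline{y}\,Q = R^{(2)}-R^{(1)}$, which follows at once from $R^{(i)}=P-L^{(i)}Q$. Since $U^{(1)}\neq U^{(2)}$ are distinct $3$-dimensional subspaces of the $4$-dimensional space $V$, we have $\dim(U^{(1)}\cap U^{(2)})=2$ and $U^{(1)}+U^{(2)}=V$. First I would pick a basis $T_1,T_2$ of $U^{(1)}\cap U^{(2)}$ and extend it to bases $T_1,T_2,S_1$ of $U^{(1)}$ and $T_1,T_2,S_2$ of $U^{(2)}$; then $T_1,T_2,S_1,S_2$ is a basis of $V$, i.e.\ a linear coordinate system on $\bbP^3$. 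Writing $\underline{y}=\alpha T_1+\beta T_2+\gamma S_1+\delta S_2$, the claim $\underline{y}\in U^{(1)}\cap U^{(2)}=\langle T_1,T_2\rangle$ is exactly the assertion $\gamma=\delta=0$.

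Next I would record the shape of the three forms in these coordinates. Because $U^{(i)}\in{\rm EQ}(H)$, the cubic $R^{(1)}$ lies in $k[U^{(1)}]=k[T_1,T_2,S_1]$ (no $S_2$) and $R^{(2)}$ lies in $k[U^{(2)}]=k[T_1,T_2,S_2]$ (no $S_1$); hence $R^{(2)}-R^{(1)}$ contains no monomial divisible by $S_1 S_2$. Moreover, admissibility of $(U^{(1)},U^{(2)})$ means that the projections $E^{(i)}\to\bbP^1=\Proj k[T_1,T_2]$, given by $[T_1:T_2:S_i]\mapsto[T_1:T_2]$, have degree $2$; equivalently $R^{(i)}$ has $S_i$-degree $2$, so $R^{(1)}$ has no $S_1^3$-term and $R^{(2)}$ has no $S_2^3$-term. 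Writing $q_{11},q_{12},q_{22}$ for the coefficients of $S_1^2,S_1 S_2,S_2^2$ in $Q$, I would now compare coefficients in $\underline{y}\,Q=R^{(2)}-R^{(1)}$: the coefficient of $S_1^3$ gives $\gamma q_{11}=0$ and that of $S_2^3$ gives $\delta q_{22}=0$.

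It then remains to prove that $q_{11}$ and $q_{22}$ are nonzero, and this is the crux. Note $q_{22}=Q(p^{(1)})$ and $q_{11}=Q(p^{(2)})$, where $p^{(1)}=[0:0:0:1]$ and $p^{(2)}=[0:0:1:0]$ are the common zeros of $U^{(1)}$ and $U^{(2)}$, i.e.\ the centers of the projections $H\to E^{(i)}$. So it suffices to show that neither center lies on the quadric $V(Q)$. Since $(Q,P)=(Q,R^{(1)})$, the curve $H$ equals $V(Q)\cap V(R^{(1)})$, and $R^{(1)}$, being a form in $T_1,T_2,S_1$ only, is a cubic cone with vertex $p^{(1)}$; thus $p^{(1)}\in V(R^{(1)})$, and $p^{(1)}\in V(Q)$ would force $p^{(1)}\in H$. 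But the degree-$2$ morphism $H\to E^{(1)}$ of ${\rm EQ}(H)$ is exactly the linear projection from $p^{(1)}$ onto the plane cubic $E^{(1)}$ of degree $3$, and for the smooth canonical curve $H$ of degree $2g-2=6$ the projection formula gives $2\cdot 3=\deg H-\operatorname{mult}_{p^{(1)}}(H)=6-\operatorname{mult}_{p^{(1)}}(H)$, whence $\operatorname{mult}_{p^{(1)}}(H)=0$ and $p^{(1)}\notin H$. Therefore $Q(p^{(1)})=q_{22}\neq 0$, and symmetrically $Q(p^{(2)})=q_{11}\neq 0$. Feeding this back into $\gamma q_{11}=0$ and $\delta q_{22}=0$ yields $\gamma=\delta=0$, i.e.\ $\underline{y}\in U^{(1)}\cap U^{(2)}$.

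I expect the only genuine obstacle to be the step that the projection centers avoid $V(Q)$ (equivalently, that $Q$ does not vanish identically on the line through $p^{(1)}$ and $p^{(2)}$); the rest is bookkeeping with coefficients. The points I would be most careful about are the precise degree/multiplicity form of the projection formula for the smooth degree-$6$ curve $H$, and the fact that the morphism supplied by ${\rm EQ}(H)$ is literally the linear projection from $p^{(1)}$, so that its degree $2$ may be used as stated.
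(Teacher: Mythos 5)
Your proposal is correct and follows essentially the same route as the paper's proof: the same adapted basis of $V$ (your $S_1,S_2$ are the paper's $T_3^{(1)},T_3^{(2)}$), the same key identity $\underline{y}\,Q=R^{(2)}-R^{(1)}$, and the same coefficient comparison, with admissibility supplying the absence of $S_i^3$-terms in $R^{(i)}$ and the degree-$2$ property of $H\to E^{(i)}$ supplying the nonvanishing of the $S_i^2$-coefficients of $Q$. The only difference is in presentation: where the paper asserts tersely that degree $2$ of $H\to E^{(i)}$ forces $Q$ to contain the monomial $(T_3^{(j)})^2$, you justify the same nonvanishing by showing via the projection formula that the projection centers lie off $V(Q)$ --- a more detailed account of the identical fact, since the coefficient in question equals the value of $Q$ at the center.
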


\begin{proof}
Let $\{T_1,T_2 \}$ be a basis of $U^{(1)}\cap U^{(2)}$ and choose $\{ T_3^{(1)}, T_3^{(2)} \}$ such that $\{T_1,T_2,T_3^{(i)}\}$ be a basis of $U^{(i)}$.
Then $\{T_1,T_2,T_3^{(1)}, T_3^{(2)}\}$ is a basis of $V$.
It follows from $Q\not\in k[U^{(1)}]\cap I$ that $Q$ contains $T_3^{(2)}$.
Moreover since the morphism $H\to E^{(i)}$ is of degree $2$, the quadratic $Q$ contains the monomial $(T_3^{(2)})^2$.
Similarly $Q$ contains the monomial $(T_3^{(1)})^2$.
Also, since $E^{(i)} \to \Proj k[U^{(1)}\cap U^{(2)}]$ is of degree $2$, the cubic $R^{(i)}$ are of degree $2$ as a polynomial in $T_3^{(i)}$.
Then, by
\[
R^{(2)}-R^{(1)}=(L^{(1)}-L^{(2)})Q,
\]
we conclude that $L^{(1)}-L^{(2)}$ does not contain $T_3^{(1)}$ and $T_3^{(2)}$.
\end{proof}

Choose an element $\underline x$ of
\[
U^{(1)}\cap U^{(2)} = \left\{\xi_1x+\xi_2 y+ \xi_3 z + \xi_4 w \ \left|\ \sum_{k=1}^4 a_k^{(i)}\xi_k=0 \text{ for } i=1,2 \right. \right\}
\]
such that $\underline x$ and $\underline y$ are linearly independent.
Choose $\underline z^{(i)} \in \{X^{(i)}, Y^{(i)}, Z^{(i)}\}$ such that $\underline z^{(i)}\not\in U^{(1)}\cap U^{(2)}$.
Note that $\underline x$, $\underline y$, $\underline z^{(i)}$ and $W^{(i)}$ are linear combinations of $x$, $y$, $z$ and $w$, and they are linearly independent.
By the coordinate change 
to
$\{\underline x,\underline y,\underline z^{(i)}, W^{(i)}\}$,
we regard $R^{(i)}$ as a polynomial in $\underline x$, $\underline y$ and $\underline z^{(i)}$, since $W^{(i)}$ does not appear.
Substitute $1$ for $\underline{y}$ and consider $R^{(i)}$ as an element of $k[\underline x][\underline z^{(i)}]$.
Note that $(U^{(1)},U^{(2)})$ is admissible if and only if $R^{(i)}$ ($i=1,2$) are quadratic in $\underline z^{(i)}$.
%
Let $D^{(i)}$ be the discriminant of the quadratic polynomial $R^{(i)}$, which is a polynomial $\in k[\underline x]$ of degree $3$.
Let $\underline x_1^{(i)}$, $\underline x_2^{(i)}$ and $\underline x_3^{(i)}$ $(i=1,2)$ be the roots of $D^{(i)}$.
Then the morphism $E^{(i)}\to\bbP^1=\Proj k[\underline x, \underline y]$ is ramified at $(\underline x^{(i)}_k:1)$ ($k=1,2,3$) and the infinity $(1:0)$.
Write $v_H(U^{(1)},U^{(2)}) = (\underline x_1^{(1)},\underline x_2^{(1)},\underline x_3^{(1)},\underline x_1^{(2)},\underline x_2^{(2)},\underline x_3^{(2)})$.
As the isomorphism class of $H$ is determined by $v_H(U^{(1)},U^{(2)})$ up to arrangement of $\{\{\underline x_1^{(i)},\underline x_2^{(i)},\underline x_3^{(i)}\}\mid i=1,2\}$ and affine transformation (automorphism of $\bbP^1$ stabilizing the infinity), we have the following key proposition:

\begin{proposition}\label{prop:isom}
Let $H_0$ and $H$ be two Howe curves.
Let $(U_0^{(1)}, U_0^{(2)})$ be an admissible pair of elements of ${\rm EQ}(H_0)$.Write $v_{H_0}(U_0^{(1)},U_0^{(2)})=(\underline t_1^{(1)},\underline t_2^{(1)},\underline t_3^{(1)},\underline t_1^{(2)},\underline t_2^{(2)},\underline t_3^{(2)})$.
We have that $H_0$ and $H$ are isomorphic if and only if
there exists
an admissible pair $(U^{(1)}, U^{(2)})$ of elements of ${\rm EQ}(H)$
such that writting $v_H(U^{(1)},U^{(2)}) = (\underline x_1^{(1)},\underline x_2^{(1)},\underline x_3^{(1)},\underline x_1^{(2)},\underline x_2^{(2)},\underline x_3^{(2)})$, there exists $(\sigma,\tau,w)\in {\frak S}_3\times {\frak S}_3 \times {\frak S}_2$ such that
\begin{eqnarray}\label{eq:mat}
\rank\begin{pmatrix}
\underline x_{\sigma(1)}^{(w(1))}&\underline x_{\sigma(2)}^{(w(1))}&\underline x_{\sigma(3)}^{(w(1))}&\underline x_{\tau(1)}^{(w(2))}&\underline x_{\tau(2)}^{(w(2))}&\underline x_{\tau(3)}^{(w(2))}\\
\underline t_1^{(1)}&\underline t_2^{(1)}&\underline t_3^{(1)}&\underline t_1^{(2)}&\underline t_2^{(2)}&\underline t_3^{(2)}\\
1&1&1&1&1&1
\end{pmatrix}\le 2,
\end{eqnarray}
where $\frak S_n$ is the symmetric group of degree $n$.
\end{proposition}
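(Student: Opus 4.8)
The plan is to interpret the vector $v_H(U^{(1)},U^{(2)})$ as recording the branch configuration of the two double covers constituting the Howe structure, and then to reduce the proposition to the classical fact that a double cover of $\bbP^1$ is determined up to isomorphism by its branch divisor. Concretely, once the common ramification point has been placed at the infinity $(1:0)$ of $\bbP^1=\Proj k[U^{(1)}\cap U^{(2)}]$ (which is legitimate by Lemma \ref{lem:y}, since $\underline y$ lies in $U^{(1)}\cap U^{(2)}$ and $E^{(i)}\to\bbP^1$ is ramified at $\underline y=0$), the remaining branch points of $E^{(i)}\to\bbP^1$ are exactly the roots $\underline x_1^{(i)},\underline x_2^{(i)},\underline x_3^{(i)}$ of the discriminant cubic $D^{(i)}$. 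Thus $v_H(U^{(1)},U^{(2)})$ is precisely the list of finite branch points of the pair of covers recovering $H$, and the intrinsic freedom in forming it --- the choice of admissible pair in ${\rm EQ}(H)$, the labelling of the three roots in each group, the choice of which quotient is listed first, and the choice of the affine coordinate $\underline x$ completing $\underline y$ to a basis of $U^{(1)}\cap U^{(2)}$ --- accounts respectively for the existential quantifier over $(U^{(1)},U^{(2)})$, for $\sigma$ and $\tau$, for $w\in{\frak S}_2$, and for the affine transformation of $\bbP^1$ fixing the infinity.

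For the sufficiency direction I would argue as follows. Given an affine transformation $x\mapsto \alpha x+\beta$ with $\alpha\in k^\times$ that, after the permutations $\sigma,\tau,w$, carries the finite branch points of $H$ to those of $H_0$ and fixes the infinity, one obtains an automorphism of $\bbP^1$ matching the two branch configurations (with the common branch point going to the common branch point). Since each $E^{(i)}\to\bbP^1$ is the unique double cover branched along its four points, this automorphism lifts to isomorphisms $E^{(i)}\to E_0^{(w(i))}$ compatible with the projections, hence to an isomorphism of the fibre products over $\bbP^1$, and therefore of their normalizations; this yields $H\cong H_0$.

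The necessity direction is the main obstacle, and I would handle it using the rigidity of the canonical model. If $H\cong H_0$, then since both are canonical curves of genus $4$ (Proposition \ref{HoweIsCanonical}) the isomorphism is induced by a linear automorphism $g$ of $V$. Transporting the fixed admissible pair $(U_0^{(1)},U_0^{(2)})$ through $g$ gives subspaces $U^{(i)}:=g^{-1}U_0^{(i)}$; because $g$ conjugates the involutions cutting out the elliptic quotients (cf.\ Remark \ref{rem:EQ}) to involutions of $H$, the pair $(U^{(1)},U^{(2)})$ again lies in ${\rm EQ}(H)$ and is admissible. The map induced by $g$ on $\bbP^1=\Proj k[U^{(1)}\cap U^{(2)}]$ is a projective automorphism, and it must fix the infinity because the common ramification point is intrinsically the unique shared branch point and is placed at infinity on both sides; hence it is an affine transformation sending the finite branch points of $E^{(i)}$ to those of $E_0^{(w(i))}$ up to relabelling, producing the required $(\sigma,\tau,w)$. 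The delicate points here are to verify that admissibility is preserved under $g$ and that the shared point is forced to map to the shared point, so that one genuinely lands in the affine group rather than the full $\PGL_2$.

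Finally, I would translate ``agreement up to affine transformation'' into the rank condition \eqref{eq:mat}. The existence of $(\alpha,\beta)$ with $\underline t_j=\alpha\,\underline x_{\mathrm{perm}(j)}+\beta$ for all six indices $j$ is equivalent to all six points $(\underline x_{\mathrm{perm}(j)},\underline t_j)$ lying on a common line, i.e.\ to the three rows of the matrix in \eqref{eq:mat} being linearly dependent, which is exactly $\rank(\cdots)\le 2$. It then remains to observe that any such dependency forces $\alpha\ne 0$: the finite branch points in each of the two data rows are pairwise distinct (the covers $E_i\to\bbP^1$ have disjoint finite branch loci, by the Howe-type coprimality of $f_1$ and $f_2$), so neither row is constant, which rules out the degenerate dependencies and guarantees that the matching map is a genuine affine automorphism of $\bbP^1$.
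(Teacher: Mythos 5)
Your proof is correct and follows essentially the same route as the paper: the paper justifies the proposition in a single sentence --- the isomorphism class of $H$ is determined by $v_H(U^{(1)},U^{(2)})$ up to rearrangement of the two groups of three roots and affine transformations of $\bbP^1$ stabilizing the infinity --- and your argument (branch data determines each double cover of $\bbP^1$ and hence the fibre product, canonical rigidity transports admissible pairs under a linear automorphism, and the rank condition \eqref{eq:mat} encodes affine equivalence of the six pairwise distinct finite branch points) is exactly a detailed verification of that sentence. The points you flag as delicate (preservation of admissibility under $g$, and the shared branch point being forced to the shared branch point so that one lands in the affine group rather than all of $\PGL_2$) are handled correctly by your conjugation and intrinsicness arguments, so there is no gap.
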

\section{Main algorithms and main results}\label{sec:algorithm}

\subsection{Main algorithms}\label{subsec:algorithm}
Based on the theory described in the previous sections, we present the two main algorithms below.
Given a rational prime $p > 3$, the first one (Algorithm \ref{alg:main1}) lists superspecial Howe curves, and in particular it determines the (non-)existence of such a curve.

\begin{algorithm}\label{alg:main1}
\textsc{ListOfSuperspecialHoweCurves}($p$)
\begin{description}
\item[Input] A rational prime $p > 3$.
\item[Output] A list $\mathcal{H}(p)$ of superspecial Howe curves, each of which is represented by a tuple $(A_1, B_1, A_2, B_2, \lambda , \mu, \nu ) \in (\mathbb{F}_{p^2})^7$ of parameters.
\end{description}
\begin{enumerate}
\item Compute a list $\mathcal{C}$ of all Weierstrass coefficients pairs $(A, B) \in (\mathbb{F}_{p^2})^2$ such that $E_{A,B} : y^2 = x^3 + A x + B$ is supersingular, together with the set $J \subset \mathbb{F}_{p^2}$ of all supersingular $j$-invariants as follows:
\begin{enumerate}
\item[(1-1)] Set $J$ $\leftarrow$ $\emptyset$, $\mathcal{C}$ $\leftarrow$ $\emptyset$ and $e$ $\leftarrow$ $(p-1)/2$.
\item[(1-2)] Compute the set $\mathcal{T}$ of all roots of $H_p(t)=\sum_{i=0}^e \binom{e}{i}^2 t^i$. 
\item[(1-3)] For each $t_0 \in \mathcal{T}$:
\begin{enumerate}
\item[(1-3-1)] Compute $j_0 := 2^8 (t_0^2 - t_0 + 1)^3 t_0^{-2} (t_0-1)^{-2}$.
\item[(1-3-2)] If $j_0 \notin J$, set $J$ $\leftarrow$ $J \cup \{ j_0 \}$ and $\mathcal{C}$ $\leftarrow$ $\mathcal{C} \cup \{ (A_0, B_0) \}$, where $A_0:=- (t_0^2 - t_0 + 1)/3$ and $B_0:=- (2 t_0^3 - 3 t_0^2 - 3 t_0 + 2)/27$.
\end{enumerate}
{\bf Note:} Each $j_0$ is equal to the $j$-invariant of the supersingular elliptic curve $E_{t_0}: y^2 = x (x-1)(x-t_0)$ in Legendre form, and $E_{A_0,B_0}$ is a supersingular elliptic curve over $\mathbb{F}_{p^2}$ with $j$-invariant equal to $j_0$.
\end{enumerate}
\item Set $\mathcal{H}(p)$ $\leftarrow$ $\emptyset$.
For each pair of $(A_1, B_1)$ and $(A_2, B_2)$ in $\mathcal{C}$, possibly choosing $(A_1,B_1) = (A_2,B_2)$, compute all $(\lambda , \mu, \nu ) \in (\mathbb{F}_{p^2})^3$ of Howe type such that the normalization $H$ of $E_1 \times_{\mathbf{P}^1} E_2$ is superspecial, where $E_i$ is an elliptic curve defined in Subsection \ref{subsec:def} with $E_i \cong E_{A_i, B_i}$ over $\mathbb{F}_{p^2}$ for each $i$.
\begin{enumerate}
\item[(2-1)] Compute the set $\mathcal{V}(A_1,B_1,A_2,B_2)$ of elements $(\lambda, \mu, \nu)$ such that the Cartier-Manin matrix \eqref{Cartier-Manin matrix} is zero.
\item[(2-2)] For each $(\lambda, \mu, \nu) \in \mathcal{V}(A_1,B_1,A_2,B_2)$:
Test whether $f_1$ and $f_2$ in \eqref{f1} and \eqref{f2} are coprime or not, by computing the resultant $\mathrm{Res}_x(f_1,f_2)$.
If $f_1$ and $f_2$ are coprime, i.e., $\mathrm{Res}_x(f_1,f_2) \neq 0$, set $\mathcal{H}(p)$ $\leftarrow$ $\mathcal{H}(p) \cup \{ (A_1, B_1, A_2, B_2, \lambda, \mu, \nu) \}$.
\end{enumerate}
{\bf Note:} It suffices to compute elements $(\lambda, \mu, \nu)$ with $\nu = 1$, see Remark \ref{rem:nu1} below.
(The other cases (i)-(iii) are possible, this paper considers only the case (iv) for simplicity.)
\item Output $\mathcal{H}(p)$.
\end{enumerate}
\end{algorithm}

\begin{remark}\label{rem:nu1}
In Step (2) of Algorithm \ref{alg:main1}, it suffices to compute $(\lambda, \mu, \nu)$ satisfying one of the following four cases:
(i) $\lambda = 1$ or $(\lambda, \mu) = (0,1)$;
(ii) $\lambda = 1$ or $(\lambda, \nu) = (0,1)$;
(iii) $\mu = 1$; or
(iv) $\nu = 1$.
Namely, we may assume
\begin{enumerate}
\item[(i)] $\mathcal{V}(A_1,B_1,A_2,B_2) \subset \{ ( 1, \mu, \nu ) : (\mu , \nu) \in (\mathbb{F}_{p^2}^{\times})^2 \} \cup \{ ( 0, 1, \nu ) : \nu \in \mathbb{F}_{p^2}^{\times} \}$;
\item[(ii)] $\mathcal{V}(A_1,B_1,A_2,B_2) \subset \{ ( 1, \mu, \nu ) : (\mu , \nu) \in (\mathbb{F}_{p^2}^{\times})^2 \} \cup \{ ( 0, \mu, 1 ) : \mu \in \mathbb{F}_{p^2}^{\times} \}$;
\item[(iii)] $\mathcal{V}(A_1,B_1,A_2,B_2) \subset \{ ( \lambda, 1, \nu ) : (\lambda, \nu) \in \mathbb{F}_{p^2} \times \mathbb{F}_{p^2}^{\times} \}$; or
\item[(iv)] $\mathcal{V}(A_1,B_1,A_2,B_2) \subset \{ ( \lambda, \mu, 1 ) : (\lambda, \mu) \in \mathbb{F}_{p^2} \times \mathbb{F}_{p^2}^{\times} \}$.
\end{enumerate}
The reason we may assume this is that if $H_0$ and $H$ are Howe curves obtained respectively by $(A_1, B_1, A_2, B_2, \lambda, \mu, \nu)$ and $(A_1, B_1, A_2, B_2, c \lambda, c \mu, c \nu)$ for some $c \in k^{\times}$ with $k := \overline{\mathbb{F}_p}$, then $H_0 \cong H$ over $k$.
Indeed, the morphism sending $(x, y, z, w)$ to $(x, c^{-1} y, \sqrt{c} z, \sqrt{c} w)$ gives an isomorphism between $H_0$ and $H$, considering their canonical forms (cf.\ Subsection \ref{subsec:can}).
\end{remark}

Note that Step (1-2) (resp.\ (2-1)) has two (resp.\ three) variants with different complexities, see Subsection \ref{subsec:complexity} for more details and the fastest ones.

Once superspecial Howe curves are listed by Algorithm \ref{alg:main1}, the second main algorithm (Algorithm \ref{alg:main2}) classifies their isomorphism classes over $\overline{\mathbb{F}_p}$.

\begin{algorithm}\label{alg:main2}
\textsc{IsomorphismClassesOfHoweCurves}($\mathcal{H}(p)$)
\begin{description}
\item[Input] A list $\mathcal{H}(p)$ computed by Algorithm \ref{alg:main1}.
\item[Output] A list $\mathcal{H}_{\rm alc}(p) \subset \mathcal{H}(p)$ that represents the set of the $\overline{\mathbb{F}_p}$-isomorphism classes of Howe curves defined by $(A_1, B_1, A_2, B_2, \lambda , \mu, \nu ) \in \mathcal{H}(p)$.
\end{description}
\begin{enumerate}
\item For each $(A_1, B_1, A_2, B_2, \lambda , \mu, \nu ) \in \mathcal{H}(p)$:
\begin{enumerate}
\item[(1-1)] Represent the Howe curve $H$ defined by $(A_1, B_1, A_2, B_2, \lambda , \mu, \nu )$ as a canonical model $V(Q,P)$ for $Q:=z^2 - w^2 - q(x, y)$ and $P:= z^2 y - f_1(x)$, where $f_1(x)$ (resp.\ $q(x,y)$) is defined in \eqref{f1} (resp.\ \eqref{eq:q}).
\item[(1-2)] Compute the set $\mathrm{EQ}(H)$ by using the method described in Subsection \ref{subsec:defEQ} (for a pseudo code, see Algorithm \ref{alg:sub1} in Subsection \ref{subsec:EQ} below).
In our computation, we represent each element $U \in \mathrm{EQ}(H)$ by $(\mathbf{a}, \mathbf{b}, \boldsymbol{U}_{\mathbf{a},\mathbf{b}} )$, where $\mathbf{a}:= (a_1, a_2, a_3, a_4)$ and $\mathbf{b}:= (b_1, b_2, b_3, b_4)$ are vectors in $k^4$ with $k := \overline{\mathbb{F}_p}$ such that $a_1 \xi_1 + a_2 \xi_2 + a_3 \xi_3 + a_4 \xi_4 = 0$ for all $(\xi_1, \xi_2, \xi_3, \xi_4) \in k^4$ with $\xi_1 x + \xi_2 y + \xi_3 z + \xi_4 w \in V := k \langle x, y, z, w \rangle$, and where $\boldsymbol{U}_{\mathbf{a},\mathbf{b}} \in \mathrm{GL}_4(k)$ is a basis matrix for the $3$-dimensional space $U$.
\item[(1-3)] For each pair of $(\mathbf{a}^{(i)}, \mathbf{b}^{(i)}, \boldsymbol{U}_{\mathbf{a}^{(i)}, \mathbf{b}^{(i)}} )$ with $i = 1$ and $2$ representing two distinct elements $U_1$ and $U_2$ in $\mathrm{EQ}(H)$ respectively, compute $v_H (U^{(1)},U^{(2)})$ by the method described in Subsection \ref{subsec:newIsom} (for a pseudo code, see Algorithm \ref{alg:sub2} in Subsection \ref{subsec:VH} below).
Let $V_H$ be the set of computed $v_H (U^{(1)},U^{(2)})$'s.
\end{enumerate}
\item To obtain the desired set $\mathcal{H}_{\rm alc}(p)$, apply Proposition \ref{prop:isom} to determine $H_0 \cong H$ over $\overline{\mathbb{F}_p}$ or not from $V_{H_0}$ and $V_H$, for each pair of $H_0$ and $H$ in $\mathcal{H} (p)$.
\item Output $\mathcal{H}_{\rm alc}(p)$.
\end{enumerate}
\end{algorithm}


\subsection{Main results}\label{subsec:results}
This section presents results we have obtained by executing the main algorithms (Algorithms \ref{alg:main1} and \ref{alg:main2}) in Subsection \ref{subsec:algorithm}. 
We implemented the algorithms over Magma V2.22-3 in its 32-bit version on a laptop with Windows OS, a 2.60 GHz Inter Core i5-4210M processor and 8.00 GB memory.
Until this submission, we have executed Algorithm \ref{alg:main1} (resp.\ Algorithm \ref{alg:main2}) for $p$ up to $331$ (resp.\ $53$), and it took about $3.5$ hours (resp.\ $0.5$ day) in total to complete the execution.
Note that we stopped Algorithms \ref{alg:main1} for $p > 53$ once one $(A_1, B_1, A_2, B_2, \lambda, \mu, \nu)$ was found in Step (2).
As a result, we have the following:
\begin{itemize}
\item ({\bf Theorem \ref{thm:main1}}) There exists a superspecial Howe curve (and so a canonical curve of genus $4$) in characteristic $p$ for every prime $p=5$ or $7 < p \leq 331$.
\end{itemize}
For $p > 331$, an out of memory error happens, but we could avoid this error by using Magma on a PC with the other OS\footnote[1]{Note that Magma on Windows is only supported in 32-bit, and the memory limit for the 32-bit application is 1.30 GB.}, which we plan to adopt in the near future.
Also by the execution of Algorithm \ref{alg:main2}, 
\begin{itemize}
\item ({\bf Theorem \ref{thm:main2}}) We have determined the number of $\overline{\mathbb{F}_{p}}$-isomorphism classes of superspecial Howe curves for $5 \leq p \leq 53$ with no out of memory error.
\end{itemize}
Table \ref{table:1} below summarizes the results on the number of $\overline{\mathbb{F}_{p}}$-isomorphism classes, which we denote by $n (p)$.
For a comparison, we also implemented the conventional method in \cite{KH17} (for a brief review, see Subsection \ref{IsomTestCanModel} of this paper) to classify the isomorphism classes.
Note that Step (1-3) of Algorithm \ref{alg:main2} tests $H_0 \cong H$ from the inputs $V_{H_0}$ and $V_{H}$, which are pre-computed in Step (1-2).
On the other hand, the conventional method~\cite{KH17} decides $H_0 \cong H$ from (the standard forms of) $H_0$ and $H$.

\begin{table}[t]
\centering{
\caption{Our enumeration results obtained by Algorithm \ref{alg:main2} (and by the method in \cite{KH17} for a comparison on timing).
For each method, we denote by $t_{\rm Iso}$ the average time of isomorphism-tests.
}
\label{table:1}
\vspace{-2mm}
\begin{centering}
\scalebox{0.88}{
\begin{tabular}{c||c|c||r|r|r||r||r||r} \hline
& & $n(p)$ & \multicolumn{6}{|c}{Benchmark timing data (all times shown are in seconds)} \\ \cline{4-9}
$p$ & $\# \mathcal{H}(p)$ & {\bf (Thm.} & \multicolumn{4}{|c||}{Our new method (Algorithm \ref{alg:main2})} & \multicolumn{2}{|c}{The method~\cite{KH17}} \\ \cline{4-9}
&  & {\bf 1.3)} & \multicolumn{1}{|c|}{Step (1)} & \multicolumn{1}{|c|}{Step (2)} & \multicolumn{1}{|c||}{$t_{\rm Iso}$} & \multicolumn{1}{|c||}{{\bf total time}} & \multicolumn{1}{|c||}{$t_{\rm Iso}$} & \multicolumn{1}{|c}{{\bf total time}} \\ \hline
$5$ & $9$ & $1$ & $0.312$ & $<0.001$ & $<0.001$ & $0.312$ & $0.008$ & $0.075$ \\ \hline
$7$ & $0$ & $0$ & - & - & - & - & - & - \\ \hline
$11$ & $87$ & $4$ & $0.750$ & $2.078$ & $0.007$ & $2.828$  & $0.274$ & $45.159$ \\ \hline
$13$ & $126$ & $3$ & $0.406$ & $0.359$ & $< 0.001$ & $0.765$ & $0.441$ & $164.314$ \\ \hline
$17$ & $288$ & $10$ & $2.140$ & $20.564$ & $0.008$ & $22.704$ & $0.418$ & $681.770$ \\ \hline
$19$ & $174$ & $4$ & $1.265$ & $2.578$ & $0.004$ & $3.843$ & $0.287$ & $187.388$ \\ \hline
$23$ & $1089$ & $33$ & $4.734$ & $68.629$ & $0.002$ & $73.363$ &$0.552$ & $12603.039$ \\ \hline
$29$ & $1575$ & $45$ & $8.157$ & $221.404$ & $0.003$ & $229.561$ & $0.513$ & $22976.066$ \\ \hline
$31$ & $2166$ & $59$ & $9.373$ & $297.706$ & $0.002$ & $307.079$ & - &  \\ \cline{1-8}
$37$ & $1548$ & $41$ & $6.206$ & $139.207$ & $0.002$ & $145.413$ & - & \multicolumn{1}{|c}{In progress}  \\ \cline{1-8}
$41$ & $3720$ & $105$ & $10.980$ & $1473.208$ & $0.002$ & $1490.003$ & - & \multicolumn{1}{|c}{(more than}  \\ \cline{1-8}
$43$ & $3024$ & $79$ & $11.174$ & $708.858$ & $0.002$ & $720.032$ & - & \multicolumn{1}{|c}{$0.5$ day for} \\ \cline{1-8}
$47$ & $8843$ & $235$ & $34.886$ & $21763.204$ & $0.002$ & $21798.090$ & - & \multicolumn{1}{|c}{each $p$)} \\ \cline{1-8}
$53$ & $5949$ & $167$ & $23.932$ & $7555.510$ & $0.002$ & $7579.442$ & - &  \\ \hline
\end{tabular}
}
\end{centering}
}
\vspace{-3mm}
\end{table}

\if 0

\begin{table}[h]
\centering{
\caption{Our enumeration results obtained by the execution of Algorithm \ref{alg:main2}.
For each method, we denote by $t_{\rm Iso}$ the average time of isomorphism-tests.
}
\label{table:1}
\begin{tabular}{c||c||r|r|r||r||r||r} \hline
& & \multicolumn{6}{|c}{Benchmark timing data (all times shown are in seconds)} \\ \cline{3-8}
$p$ & $n (p)$ & \multicolumn{4}{|c||}{Our new method (Algorithm \ref{alg:main2})} & \multicolumn{2}{|c}{The method~\cite{KH17}} \\ \cline{3-8}
& {\bf (Thm. 1.2)} & \multicolumn{1}{|c|}{Step (1)} & \multicolumn{1}{|c|}{Step (2)} & \multicolumn{1}{|c||}{$t_{\rm Iso}$} & \multicolumn{1}{|c||}{total time} & \multicolumn{1}{|c||}{$t_{\rm Iso}$} & \multicolumn{1}{|c}{total time} \\ \hline
$5$ & $1$ & $0.312$ & $<0.001$ & $<0.001$ & $0.312$ & $0.008$ & $0.075$ \\ \hline
$7$ & $0$ & - & - & - & - & - & - \\ \hline
$11$ & $4$ & $0.750$ & $2.078$ & $0.007$ & $2.828$  & $0.274$ & $45.159$ \\ \hline
$13$ & $3$ & $0.406$ & $0.359$ & $< 0.001$ & $0.765$ & $0.441$ & $164.314$ \\ \hline
$17$ & $10$ & $2.140$ & $20.564$ & $0.008$ & $22.704$ & $0.418$ & $681.770$ \\ \hline
$19$ & $4$ & $1.265$ & $2.578$ & $0.004$ & $3.843$ & $0.287$ & $187.388$ \\ \hline
$23$ & $33$ & $4.734$ & $68.629$ & $0.002$ & $73.363$ &$0.552$ & $12603.039$ \\ \hline
$29$ & $45$ & $8.157$ & $221.404$ & $0.003$ & $229.561$ & $0.513$ & $22976.066$ \\ \hline
$31$ & $59$ & $9.373$ & $297.706$ & $0.002$ & $307.079$ & - &  \\ \cline{1-7}
$37$ & $41$ & $6.206$ & $139.207$ & $0.002$ & $145.413$ & - & \multicolumn{1}{|c}{In progress}  \\ \cline{1-7}
$41$ & $105$ & $10.980$ & $1473.208$ & $0.002$ & $1490.003$ & - & \multicolumn{1}{|c}{(more than}  \\ \cline{1-7}
$43$ & $79$ & $11.174$ & $708.858$ & $0.002$ & $720.032$ & - & \multicolumn{1}{|c}{$0.5$ day for} \\ \cline{1-7}
$47$ & $235$ & $34.886$ & $21763.204$ & $0.002$ & $21798.090$ & - & \multicolumn{1}{|c}{each $p$)} \\ \cline{1-7}
$53$ & $167$ & $23.932$ & $7555.510$ & $0.002$ & $7579.442$ & - &  \\ \hline
\end{tabular}
}
\end{table}
\fi

We see from Table \ref{table:1} that the dominant step for Algorithm \ref{alg:main2} is Step (2) for $5 \leq p \leq 53$ although Step (1-2) includes Gr\"{o}bner basis computations\footnote[2]{Both the methods compute Gr\"{o}bner bases, which we compute by the Magma's built-in function \texttt{GroebnerBasis}.}.
Note that once $V_H$'s are computed in Step (1), only simple linear algebra is used to conduct Step (2).
Another advantage of our classification algorithm is that Step (1) (the computation of $\mathrm{EQ}(H)$'s and $V_H$'s) can be parallelized perfectly and easily.

Here, we also give a brief comparison between the classification of isomorphism classes by Algorithm \ref{alg:main2} and that by the conventional method~\cite{KH17}.
For all primes $p >5$ shown in Table \ref{table:1}, we see from the columns on {\bf total time} that the former one is dramatically faster than the latter one (e.g., for $p = 13$, $23$, $29$ more than $100$ times faster).
In particular, the average time $t_{\rm Iso}$ of isomorphism-tests in Step (2) is $< 0. 01$ seconds, which is much less than that by the conventional one for any $p > 5$.
This is because the latter one computes Gr\"{o}bner bases for {\it each} isomorphism-test, whereas Step (2) of our method has no such a computation.
While we have not determine the complexity for any of Algorithm \ref{alg:main2} and the method~\cite{KH17} yet, from these practical behaviors we also expect that the complexities of Steps (1) and (2) of Algorithm \ref{alg:main2} are $O (\# \mathcal{H}(p) )$ and $O ( (\# \mathcal{H}(p) )^2 )$ (or $O ( n(p) )$ and $O ( n(p)^2)$) respectively, and that the complexity of Algorithm \ref{alg:main2} is $O ( (\# \mathcal{H}(p) )^2 )$ (or $O ( n(p)^2)$).


\begin{remark}\label{rem:asymptotic}
In Algorithm \ref{alg:main2}, the cost of Step (1) might be asymptotically more expensive than that of Step (2) since Step (1-2) requires Gr\"{o}bner basis computations.
However, the total number of Gr\"{o}bner basis computations required in Algorithm \ref{alg:main2} is always less than that of the classification by the method in \cite{KH17}; the former is $\# \mathcal{H}(p) = O(p^5)$ but the latter is $ \#  \mathcal{H}(p) ( \#  \mathcal{H}(p) + 1)/2 = O ( \# \mathcal{H}(p) )^2) = O(p^{10})$.
These complexities are not reduced even if one uses the divide-and-conquer method.
Thus, as long as the cost of each Gr\"{o}bner basis computation in Step (1) is less than that in the method~\cite{KH17}, Algorithm \ref{alg:main2} is (about $p^5$ times) more efficient than the classification by the method in \cite{KH17}.
\end{remark}

\if 0
\begin{itemize}
\item For all primes $p$ shown in Table \ref{table:1}, the classification by Main Algorithm 2 is faster than that by the conventional method.
In particular, the average time of isomorphism tests in Step (3) is much less than that by the conventional method.
This is because the latter one requires Step (3) has no Gr\"{o}bner basis computation.
\item From a viewpoint of the asymptotic efficiency, the main difference of these two algorithms is the number of required Gr\"{o}bner basis computations.
Asymptotically, the cost of Step (2) might be more expensive than that of Step (3) since Step (2) requires Gr\"{o}bner basis computations.
However, the total number of Gr\"{o}bner basis computations required in Main Algorithm 2 is always less than that of the classification by the method in \cite{KH17}; the former is $\# \mathcal{H}(p) = O(p^5)$ but the latter is $( \# \mathcal{H}(p) )^2 = O(p^{10})$.
Thus, as long as each Gr\"{o}bner basis computation is , Main Algorithm 2 is (at least $p^5$ times) efficient than the classification by the method in \cite{KH17}.
\end{itemize}
\fi

\section{Algorithm details with complexity analysis}\label{sec:details}

In this section, we describe the details (of some steps) of our main algorithms.
In particular, we describe possible variants of Steps (1-2) and (2-1) in Algorithm \ref{alg:main1}, together with complexity analysis.
As for Algorithm \ref{alg:main2}, this section also presents pseudo codes of Steps (1-2) and (1-3), which are new computational methods proposed in Section \ref{sec:IsomTest}.

\subsection{Complexity analysis of Algorithm \ref{alg:main1} with variants of sub-routines}\label{subsec:complexity}

All time complexity bounds refer to arithmetic complexity, which is the number of operations in $\mathbb{F}_{p^2}$.
We denote by $\mathsf{M}(n)$ the time to multiply two univariate polynomials over $\mathbb{F}_{p^2}$ of degree $\leq n$.
We use the ``Soft-${\rm O}$'' notation to omit logarithmic factors; $g = O^{\sim} (h) $ means $g = O (h \log^k (h))$ for some constant $k$.
In the following, after we discuss possible variants of Steps (1-2) and (2-1) of Algorithm \ref{alg:main1}, we shall determine the total (time) complexity of Algorithm \ref{alg:main1} for the fastest variants.

\subsubsection{Variants of Step (1-2) of Algorithm \ref{alg:main1}}\label{subsec:1-2}
As we mentioned in Subsection \ref{subsec:algorithm}, there are two variants to compute the set $\mathcal{T}$ of all roots $t_0$ of $H_p (t)$.

The first one is the brute-force on $t_0 \in \mathbb{F}_{p^2}$ to check $H_p (t_0) = 0$ or not, where the evaluation $H_p (t)$ at $t = t_0$ is done in $O (p)$ since $\deg (H_p (t)) = e = O(p)$.
Thus, the complexity of this brute-force for Step (1-2) is $O (p^2) \times O (p) = O(p^3)$.

The second variant is to factor the univariate polynomial $H_p (t)$ over $\mathbb{F}_{p^2}$.
Its complexity is estimated as $O( {\log}^{2} (p) \mathsf{M}(p) )$, or $O^{\sim} (\mathsf{M}(p) ) $ if less precisely.
Indeed, since $H_{p}(t)$ is square-free and since it splits completely over $\mathbb{F}_{p^2}$, all of its roots are found only by the equal-degree factorization (EDF), e.g., \cite{GS}.
The EDF algorithm given in \cite{GS} completely factors a univariate polynomial of degree $D$ over $\mathbb{F}_q$ in $O({\log} (D) \log (q) \mathsf{M}(D) )$.
In our case with $D = e = O(p)$ and $q = p^2$, the complexity of EDF is $O ({\log}^{2} (p) \mathsf{M}(p) )$, as desired.

From this, the second variant is faster than the first one, and thus we adopt the second one in our implementation.

\if 0
Set $e$ $\leftarrow$ $(p-1)/2$.
Compute the binomial coefficients $\binom{e}{i}$ (modulo $p$) and their squares $\binom{e}{i}^2$ for all $0 \leq i \leq e$, and set $\delta_{p}(t)$ $\leftarrow$ $(-1)^e \sum_{i=0}^e \binom{e}{i}^2 t^i $, which is a polynomial in one variable $t$ over $\mathbb{F}_p$.
\fi

\subsubsection{Variants of Step (2-1) of Algorithm \ref{alg:main1}}\label{subsec:2-1}
As we mentioned in Subsection \ref{subsec:algorithm}, there are three variants (the following (i)-(iii)) to compute all $(\lambda, \mu, \nu) \in (\mathbb{F}_{q})^3$ with $\nu = 1$ and $q=p^2$ such that $M=0$, where $M$ is the Cartier-Manin matrix given in \eqref{Cartier-Manin matrix} with entries $a$, $b$, $c$ and $d$.

\begin{enumerate}
\item[(i)] Use the brute-force on $(\lambda, \mu) \in (\mathbb{F}_{p^2})^2$ to check $M =0$ or not.
\item[(ii)] Regard one of $\lambda$ and $\mu$ as a variable.
For simplicity, regard $\lambda$ as a variable.
For each $\mu \in \mathbb{F}_{p^2}$, compute the roots in $\mathbb{F}_{p^2}$ of $G:= \mathrm{gcd}(a, b, c, d) \in \mathbb{F}_{p^2}[\lambda ]$.
\item[(iii)] Regarding both $\lambda$ and $\mu$ as variables, use an approach based on resultants.
\end{enumerate}
In the following, we show that the fastest one is (ii).
In each of (i)-(iii), we compute the Cartier-Manin matrix $M$ from $f :=f_1 f_2$, where $f_1$ and $f_2$ are defined respectively in \eqref{f1} and \eqref{f2}.
As we will describe in Remark \ref{rem:compHW} below, the costs of computing $M$ for (i) and (ii) are bounded respectively by $O(p)$ and $O (p \mathsf{M}(p))$ $\mathbb{F}_{p^2}$ operations.
It is clear that the cost of computing $M$ for (iii) is not less than that for each of (i) and (ii).
For reasons of space, we give only a summary of the estimation of the complexities of (i), (ii) and (iii) (further
details to appear elsewhere).

The complexity of (i) is $p^4 \times O(p) = O(p^5)$ since the number of iterations is $p^4$.

For (ii), once $M$ is computed in $O (p \mathsf{M}(p))$, the dominant part of the rest of (ii)  is to factor $G$ with degree $\leq D$ over $\mathbb{F}_{q}$ with $q = p^2$, where $D$ denotes the maximum value of the degrees of $a$, $b$, $c$ and $d$ with respect to $\lambda$.
Factoring $G$ is done by the square-free, distinct-degree and equal-degree factorizations, and its cost is estimated as $O (D \log (D) \mathsf{M}(D) + \log (D) \log (q) \mathsf{M} (D) )$ by e.g., the method in \cite{GS}.
Since the number of iterations on $\mu$ is $q = p^2$, the complexity of (ii) is
\begin{eqnarray}
O (q p \mathsf{M}(p) + q D \log (D) \mathsf{M}(D) + q \log (D) \log (q) \mathsf{M} (D) ) .  \label{eq:comp2}
\end{eqnarray}
By Lemma \ref{lem:deg}, we have $D = O (p)$ and thus \eqref{eq:comp2} is $O (p^3 \log (p) \mathsf{M}(p) )$.
If one uses fastest algorithms with $ \mathsf{M}(n) = n \log (n) \log (\log (n))$ for polynomial multiplication, \eqref{eq:comp2} is $O (p^4 \log^2 (p) \log (\log (p) ))$.


In (iii), a way of computing the zeros $(\lambda, \mu)$ is the following:
First we compute the resultants $R_1 (\mu) = \mathrm{Res}_{\lambda}(a,b) \in \mathbb{F}_{q} [\mu]$ and $R_2 (\mu) =  \mathrm{Res}_{\lambda} (c,d) \in \mathbb{F}_{q} [\mu]$.
We next compute $G' := \mathrm{gcd}(R_1,R_2) $ and its roots over $\mathbb{F}_{q}$.
For each of computed roots, it suffices to compute the roots over $\mathbb{F}_{q}$ of $\mathrm{gcd}(a, b, c, d) \in \mathbb{F}_{q} [\lambda]$ for $a$, $b$, $c$ and $d$ evaluated at $\mu$.
With the total degree bound $O (D)$ on $a$, $b$, $c$ and $d$, we can estimate that the complexity of (iii) by the above way is the cost of computing $M$ plus
\begin{eqnarray}
O (D^2 \log (D) \mathsf{M}(D^2) + \log (D) \log (q) \mathsf{M} (D^2) ) . \label{eq:comp3}
\end{eqnarray}
If one ignores logarithmic factors in \eqref{eq:comp2} and \eqref{eq:comp3}, the lower one is \eqref{eq:comp2} (resp.\ \eqref{eq:comp3}) if $q < D^2$ (resp.\ $q > D^2$).
In our case, it follows from $D = O(p)$ and $q =p^2$ that \eqref{eq:comp2} and \eqref{eq:comp3} ignoring logarithmic factors give the same bound.
Thus, the complexity \eqref{eq:comp2} of (ii) does not exceed that of (iii), and (ii) is faster than (iii) if the cost of computing $M$ for (iii) is greater than \eqref{eq:comp3}.



From this, we adopt the fastest variant (ii) in our implementation, and assume that the complexity of Step (2-1) of Algorithm \ref{alg:main1} is bounded by $O (p^3 \log (p) \mathsf{M}(p))$.

\begin{remark}\label{rem:compHW}
In each of (i)-(iii), we require to compute the Cartier-Manin matrix.
In general, it is known that computing the Cartier-Manin matrix $M$ of a hyperelliptic curve $y^2 = f(x)$ defined over a field $K$ is reduced into multiplying matrices via recurrences by Bostan-Gaudry-Schost~\cite[Section 8]{BGS} for the coefficients of $f(x)^n$, see e.g., \cite[Section 2]{HS2} for details.
Harvey-Sutherland's algorithm \cite{HS2}, which is an improvement of the one~\cite{HS} presented at ANTS XI, is also based on this reduction, and it is the fastest algorithm to compute $M$ for the case of $K=\mathbb{F}_p$.
From this, one of the best ways to compute $M$ in (i) (resp.\ (ii), (iii)) is to extend Harvey-Sutherland's algorithm \cite{HS2} to the case of $K = \mathbb{F}_{p^2}$ (resp.\ $\mathbb{F}_{p^2}(\lambda)$, $\mathbb{F}_{p^2} (\lambda, \mu)$).
However, since we have not succeeded in the extension yet, we compute $M$ just by the reduction mentioned above.
In this case, computing $M$ in (i) (resp.\ (ii)) is done by the multiplication of matrices of $\mathbb{F}_{p^2}$ (resp.\ $\mathbb{F}_{p^2}[\lambda]$ together with constant times of divisions in $\mathbb{F}_{p^2}[\lambda]$), and its complexity is estimated as $O (p)$ (resp.\ $ O (p \mathsf{M}(p)) $).
The complexity for the case of (iii) has not been determined yet, but at least it is not less than those for the other cases. 
\end{remark}


\subsubsection{Total complexity of Algorithm \ref{alg:main1}}
Here we estimate the total complexity of Algorithm \ref{alg:main1}.
The complexity of Step (1) is dominated by that of Step (1-2), which we have estimated as $O ({\log}^{2} (p) \mathsf{M}(p) )$ in Subsection \ref{subsec:1-2}.
For Step (2), first recall from Subsection \ref{subsec:2-1} that the complexity of Step (2-1) is $O ( p^3 \log (p) \mathsf{M}(p) )$.
The number of $(\lambda, \mu, \nu)$ with $\nu = 1$ computed in Step (2-1) is $ \leq p^2 \times \mathrm{deg}(G)  = O (p^3)$.
For each $(\lambda, \mu, \nu)$, the number of operations in $\mathbb{F}_{p^2}$ for computing $\mathrm{Res}(f_1,f_2)$ does not depend on $p$, and thus the complexity of Step (2-2) is $O (p^3)$.
Since the number of possible choices of $\{ (A_1, B_1), (A_2, B_2) \}$ is $\# \mathcal{C} = O (p^2)$, computing $(\lambda, \mu, \nu)$ with $\nu = 1$ for all $\{ (A_1, B_1), (A_2, B_2) \}$ is done in $\# \mathcal{C} \times  O \left( p^3 \log (p) \mathsf{M}(p) \right) =   O \left( p^5 \log (p)  \mathsf{M}(p) \right)$ operations in $\mathbb{F}_{p^2}$.
Summing up the complexities of Steps (1) and (2), the total complexity of Algorithm \ref{alg:main1} is $O \left( p^5 \log (p)  \mathsf{M}(p) \right)$, or $O^{\sim} (p^6)$ if less precisely.

\subsection{Pseudo codes for some steps of Algorithm \ref{alg:main2} for implementation}

\subsubsection{Pseudo code for Step (1-2) of Algorithm \ref{alg:main2}}\label{subsec:EQ}
We use the same notation as in Subsections \ref{subsec:defEQ} and \ref{subsec:algorithm}.
Let $H:= V (Q, P) \subset \mathbf{P}^3$ be a canonical curve of genus $4$ (not necessarily Howe curve), where $Q$ (resp.\ $P$) is an irreducible quadratic (resp.\ cubic) form of $\mathbb{F}_{p^2} [x, y, z, w]$.
Given a pair $(Q, P)$, Algorithm \ref{alg:sub1} below computes a subset $\mathrm{EQ}(H)$ of $k^8 \times \mathrm{GL}_4 (k)$ with $k := \overline{\mathbb{F}_{p}}$ together with an integer $\ell_H \geq 1$ and a finite field $K_H$, where each element in $\mathrm{EQ}(H)$ is of the form $(\mathbf{a}, \mathbf{b}, \boldsymbol{U}_{\mathbf{a},\mathbf{b}} ) \subset (K_H)^8 \times \mathrm{GL}_4 (K_H)$ with $\mathbb{F}_{p^2} \subset K_H:= \mathbb{F}_{p^{2\ell_H}} \subset k$, see Step (1-2) of Algorithm \ref{alg:main2}.
\begin{algorithm}\label{alg:sub1}
Sub-algorithm 1: \textsc{ComputeEQ}($Q,P$)
\begin{description}
\item[Input] An irreducible quadratic form $Q$ and an irreducible cubic form $P$ of $\mathbb{F}_{p^2} [x,y,z,w]$.
\item[Output] A subset $\mathrm{EQ}(H)$ of $k^8 \times \mathrm{GL}_4 (k)$ with $k := \overline{\mathbb{F}_{p}}$, an integer $\ell_H \geq 1$, and a finite field $K_H$.
\end{description}
\begin{enumerate}
\item Set $\mathrm{EQ}(H)$ $\leftarrow$ $\emptyset$, $\ell$ $\leftarrow$ $1$ and $K_0$ $\leftarrow$ $\mathbb{F}_{p^2}$.
Let $X$, $Y$, $Z$, $W$, $a_1'$, $a_2'$, $a_3'$, $b_1'$, $b_2'$, $b_3'$ and $b_4'$ be indeterminates, where we use the notation $a_i^{\prime}$ and $b_j^{\prime}$ (resp.\ $a_i$ and $b_j$) for indeterminates (resp.\ exact elements in $k$) for distinction.
\item Compute $R :=  P - (b_1' x + b_2' y + b_3' z + b_4' w) Q \in S[x,y,z,w]$ with $S:=k[a_1', a_2', a_3', b_1', b_2', b_3', b_4']$.
\item For each of the following four cases {\bf (A)}, {\bf (B)}, {\bf (C)} and {\bf (D)}, proceed with the below steps (3-1)-(3-4):
\begin{enumerate}
\item[\bf (A)] Set $a_4$ $\leftarrow$ $1$, $\mathcal{S}$ $\leftarrow$ $\emptyset$, $x'$ $\leftarrow$ $X + a_1' W$, $y'$ $\leftarrow$ $Y + a_2' W$, $z'$ $\leftarrow$ $Z + a_3' W$, $w'$ $\leftarrow$ $W$ and $\boldsymbol{U}$ $\leftarrow$ $E_{11} - a_1' E_{14} + E_{22} - a_2' E_{24} + E_{33} - a_3' E_{34} + E_{44}$.
\item[\bf (B)] Set $a_4$ $\leftarrow$ $0$, $\mathcal{S}$ $\leftarrow$ $\{ a_3' - 1 \}$, $x'$ $\leftarrow$ $X + a_1' W$, $y'$ $\leftarrow$ $Y + a_2' W$, $z'$ $\leftarrow$ $W$, $w'$ $\leftarrow$ $Z$ and $\boldsymbol{U}$ $\leftarrow$ $E_{11} - a_1' E_{13} + E_{22} - a_2' E_{23} + E_{34} + E_{43}$.
\item[\bf (C)] Set $a_4$ $\leftarrow$ $0$, $\mathcal{S}$ $\leftarrow$ $\{ a_2' - 1, a_3' \}$, $x'$ $\leftarrow$ $X + a_1' W$, $y'$ $\leftarrow$ $W$, $z'$ $\leftarrow$ $Y$, $w'$ $\leftarrow$ $Z$ and $\boldsymbol{U}$ $\leftarrow$ $E_{11} - a_1' E_{12} + E_{23} +  E_{34} + E_{42}$.
\item[\bf (D)] Set $a_4$ $\leftarrow$ $0$, $\mathcal{S}$ $\leftarrow$ $\{ a_1' - 1, a_2', a_3' \}$, $x'$ $\leftarrow$ $W$, $y'$ $\leftarrow$ $X$, $z'$ $\leftarrow$ $Y$, $w'$ $\leftarrow$ $Z$ and $\boldsymbol{U}$ $\leftarrow$ $E_{12}+ E_{23} + E_{34} + E_{41}$.
\end{enumerate}
{\bf Note:} We denote by $E_{ij}$ the square matrix of size $4$ with the $(i,j)$ entry equal to $1$ and $0$ elsewhere.
Note also that $ (X, Y, Z, W) = (x', y', z', w') \cdot {}^t \boldsymbol{U}$.
\begin{enumerate}
\item[(3-1)] Substitute $(x',y',z',w')$ for $(x,y,z,w)$ in $R$, which we regard as an element in $S[X,Y,Z,W]$.
\item[(3-2)] Let $\mathcal{M}$ be the set of monomials in $S [X, Y, Z, W]$ of degree $3$, and set $\mathcal{S} \leftarrow \mathcal{S} \cup \{ (\mbox{the coefficient of $m$ in $R$} : \mbox{$m \in \mathcal{M}$ with $\deg_W (m) \geq 1$} \}$.
\item[(3-3)] With Gr\"{o}bner basis algorithms, compute the set $V (\mathcal{S}) \subset k^7$ of the zeros over $k$ of the ideal $\langle \mathcal{S} \rangle$ in $S$ generated by $\mathcal{S}$, where we construct the smallest extension field $K \subset k$ of $K_0$ such that $V (\mathcal{S}) \subset K^7 \subset k^7$.
If $[K : K_0] > 1$, represent all elements in $K_0$, which we have stored, as elements in $K$, and replace $\ell$ and $K_0$ respectively by $[K:K_0]$ and $K$.
\item[(3-4)] Set
\[
\mathrm{EQ}(H) \leftarrow \mathrm{EQ}(H) \cup \{ ( \mathbf{a}, \mathbf{b}, \boldsymbol{U}_{\mathbf{a},\mathbf{b}} ) : (a_1, a_2, a_3, b_1, b_2, b_3, b_4) \in V (\mathcal{S}) \},
\]
where $\mathbf{a}:= (a_1, a_2, a_3, a_4)$ and $\mathbf{b}:= (b_1, b_2, b_3, b_4)$, and where each $\boldsymbol{U}_{\mathbf{a},\mathbf{b}}$ is a matrix in $\mathrm{GL}_4 (k)$ obtained by substituting $(a_1, a_2, a_3, b_1, b_2, b_3, b_4)$ for $(a_1', a_2', a_3', b_1', b_2', b_3', b_4')$ in $\boldsymbol{U}$.
\end{enumerate}
{\bf Note:} It follows from Remark \ref{rem:EQ} that $\langle \mathcal{S} \rangle$ is of zero-dimension.
\item Output $\mathrm{EQ}(H)$, $\ell_H:=\ell_0$ and $K_H := K_0$.
\end{enumerate}
\end{algorithm}

\subsubsection{Pseudo code for Step (1-3) of Algorithm \ref{alg:main2}}\label{subsec:VH}
We use the same notation as in Subsections \ref{subsec:newIsom} and \ref{subsec:algorithm}.
Let $H:= V (Q, P) \subset \mathbf{P}^3$ be a Howe curve in canonical model.
For the set $\mathrm{EQ}(H)$ computed by Algorithm \ref{alg:sub1}, the following sub-algorithm \textsc{ComputeVH} (Algorithm \ref{alg:sub2}) computes a subset $V_{H}$ of $k^6$ with $k:=\overline{\mathbb{F}_{p}}$ required in Step (1-3) of Algorithm \ref{alg:main2} together with an integer $\ell_H^{\prime} \geq 1$ and a finite extension field $K_H^{\prime}$ of $K_H$ of degree $\ell_H^{\prime}$ such that $V_{H} \subset (K_H^{\prime})^6 \subset k^6$, where $V_H$ is the set of all $ v_H (U^{(1)},U^{(2)})$'s:
\begin{algorithm}\label{alg:sub2}
Sub-algorithm 2: \textsc{ComputeVH}($\mathrm{EQ}(H)$)
\begin{description}
\item[Input] A subset $\mathrm{EQ}(H) \subset (K_H)^8 \times \mathrm{GL}_4 (K_H)$ and an extension field $K_H$ of $\mathbb{F}_{p^2}$ computed by Algorithm \ref{alg:sub1}.
\item[Output] A subset $V_H \subset (\overline{\mathbb{F}_{p}})^6$, an integer $\ell_H^{\prime} \geq 1$, and a finite field $K_H^{\prime}$.
\end{description}
\begin{enumerate}
\item Set $V_{H}$ $\leftarrow$ $\emptyset$, $\ell$ $\leftarrow$ $1$ and $K$ $\leftarrow$ $K_H$.
\item For each pair of two distinct elements $(\mathbf{a}^{(i)}, \mathbf{b}^{(i)}, \boldsymbol{U}_{\mathbf{a}^{(i)}, \mathbf{b}^{(i)}} ) \in \mathrm{EQ}(H)$ with $i = 1$ and $2$, proceed with the following:
\begin{enumerate}
\item[(2-1)] Compute $R^{(i)}:= P-L^{(i)}\cdot Q$ with $L^{(i)} := b_1^{(i)}x+b_2^{(i)}y+b_3^{(i)}z+b_4^{(i)}w$.
\item[(2-2)] Let $\mathbf{u}_j^{(i)}$ denote the $i$-th row vector of $\boldsymbol{U}_{\mathbf{a}^{(i)}, \mathbf{b}^{(i)}}$ for $1 \leq j \leq 3$, and let $\mathbf{w}^{(i)}$ be the forth row vector of $\boldsymbol{U}_{\mathbf{a}^{(i)}, \mathbf{b}^{(i)}}$.

\noindent {\bf Note:} Namely $X^{(i)} = (x, y, z, w) \cdot {}^t \mathbf{u}_1^{(i)}$, $Y^{(i)} = (x, y, z, w) \cdot {}^t \mathbf{u}_2^{(i)}$, $Z^{(i)} = (x, y, z, w) \cdot {}^t \mathbf{u}_3^{(i)}$ and $W^{(i)} = (x, y, z, w) \cdot {}^t \mathbf{w}^{(i)}$.
\item[(2-3)] Compute a basis $\{ \mathbf{x}_1, \mathbf{x}_2 \} \subset K^4$ of the null space of dimension $2$ defined by $\mathbf{a}^{(i)} \cdot {}^t \mathbf{x} = 0$ for $i = 1$ and $2$ with $\mathbf{x} \in K^4$.
\item[(2-4)] Check whether $(U^{(1)}, U^{(2)} )$ is admissible (cf.\ Subsection \ref{subsec:newIsom}) or not by conducting the following:
For each of $i =1$ and $2$:
\begin{enumerate}
\item[(2-4-1)] For each $1 \leq j \leq 3$, test whether
\[
\mathrm{rank} \begin{pmatrix} {}^t \mathbf{x}_1 & {}^t \mathbf{x}_2 & {}^t \mathbf{u}_j^{(i)} \end{pmatrix}  = \mathrm{rank} \begin{pmatrix} {}^t \mathbf{x}_1 & {}^t \mathbf{x}_2 \end{pmatrix},
\]
i.e., $\mathbf{u}_j^{(i)} \in \langle \mathbf{x}_1, \mathbf{x}_2 \rangle_K$, or not.
If $\mathbf{u}_j^{(i)} \notin \langle \mathbf{x}_1, \mathbf{x}_2 \rangle_K$, set $\mathbf{z}^{(i)}$ $\leftarrow$ $\mathbf{u}_j^{(i)}$.
\item[(2-4-2)] Set $\boldsymbol{M}^{(i)} $ $\leftarrow$ $({}^t \mathbf{x}_1, {}^t \mathbf{x}_2, {}^t \mathbf{z}^{(i)}, {}^t \mathbf{w}^{(i)} ) $.
\item[(2-4-3)] Compute $\left( \boldsymbol{M}^{(i)} \right)^{-1}$, and substitute $(x_1, x_2, \underline{z}^{(i)}, W^{(i)}) \left( \boldsymbol{M}^{(i)} \right)^{-1} $ into $(x, y, z, w)$ in $R^{(i)}$, where $R^{(i)} \in K [x_1, x_2, \underline{z}^{(i)}, W^{(i)}]$ has no non-zero term containing $W^{(i)}$, and where we regard $R^{(i)}$ as a polynomial over $K[x_1,x_2]$ of with variable $\underline{z}^{(i)}$.
\item[(2-4-4)] If $\deg_{\underline{z}^{(i)}} (R^{(i)}) =3$ (i.e., $(U^{(1)}, U^{(2)})$ is not admissible), then go back to Step (2) and choose another pair of $(\mathbf{a}^{(i)}, \mathbf{b}^{(i)}, \boldsymbol{U}_{\mathbf{a}^{(i)}, \mathbf{b}^{(i)}} ) \in \mathrm{EQ}(H)$ with $i = 1$ and $2$.
\end{enumerate}
\item[(2-5)] Compute $\mathbf{y} := \mathbf{b}_1 - \mathbf{b}_2$.

\noindent {\bf Note:} We have $\underline{y} = (x, y, z, w) \cdot {}^t \mathbf{y}$, where $\underline{y}$ is as in Lemma \ref{lem:y}.
\item[(2-6)] Find an element $\underline{x} = \xi_1 x + \xi_2 y + \xi_3 z + \xi_4 w \in U^{\rm (1)} \cap U^{\rm (2)}$ such that $\underline{x}$ and $\underline{y}$ are linearly independent over $K$, by the following procedure:
\begin{itemize}
\item For $j =1$ and $2$, test the linear independence of $\mathbf{x}_j$ and $\mathbf{y}$.
If $\mathbf{x}_j$ and $\mathbf{y}$ are linearly independent, set $\boldsymbol{\xi} $ $\leftarrow$ $\mathbf{x}_j$ and write $\boldsymbol{\xi} = (\xi_1, \xi_2, \xi_3, \xi_4 ) $.
\end{itemize}
\if 0
\begin{enumerate}
\item[(2-4-1)] Compute a basis $\{ \mathbf{x}_1, \mathbf{x}_2 \} \subset k^4$ of the null space of dimension $2$ defined by $\mathbf{a}^{(i)} \cdot {}^t \mathbf{x} = 0$ for $i = 1$ and $2$ with $\mathbf{x} \in k^4$.
The basis is obtained by computing the row echelon form of the matrix $\begin{pmatrix} {}^t \mathbf{a}^{(1)} & {}^t \mathbf{a}^{(2)} & I_4  \end{pmatrix}$ with Gaussian elimination, where $I_4$ denotes the identity matrix of size $4$. 
\item[(2-4-2)] For $j =1$ and $2$, test the linear independence of $\mathbf{x}_j$ and $\mathbf{y}$.
If $\mathbf{x}_j$ and $\mathbf{y}$ are linearly independent, set $\boldsymbol{\xi} $ $\leftarrow$ $\mathbf{x}_j$ and write $\boldsymbol{\xi} = (\xi_1, \xi_2, \xi_3, \xi_4 ) $.
\end{enumerate}
\fi
{\bf Note:} For one or the other of $\mathbf{x}_1$ and $\mathbf{x}_2$, the vectors $\mathbf{x}_j$ and $\mathbf{y}$ are linearly independent.
Note also that $\underline{x} = \xi_1 x + \xi_2 y + \xi_3 z + \xi_4 w $, where $\underline{x}$ is as in the proof of Lemma \ref{lem:y}.
\item[(2-7)] For each of $i =1$ and $2$, conduct the following:
\begin{enumerate}
\item[(2-7-1)] For each $1 \leq j \leq 3$, test whether
\[
\mathrm{rank} \begin{pmatrix} {}^t \mathbf{x}_1 & {}^t \mathbf{x}_2 & {}^t \mathbf{u}_j^{(i)} \end{pmatrix}  = \mathrm{rank} \begin{pmatrix} {}^t \mathbf{x}_1 & {}^t \mathbf{x}_2 \end{pmatrix},
\]
i.e., $\mathbf{u}_j^{(i)} \in \langle \mathbf{x}_1, \mathbf{x}_2 \rangle_K$, or not.
If $\mathbf{u}_j^{(i)} \notin \langle \mathbf{x}_1, \mathbf{x}_2 \rangle_K$, set $\mathbf{z}^{(i)}$ $\leftarrow$ $\mathbf{u}_j^{(i)}$.
\item[(2-7-2)] Set $\boldsymbol{T}^{(i)} $ $\leftarrow$ $({}^t \boldsymbol{\xi}, {}^t \mathbf{y}, {}^t \mathbf{z}^{(i)}, {}^t \mathbf{w}^{(i)} ) $.
\item[(2-7-3)] Compute $\left( \boldsymbol{T}^{(i)} \right)^{-1}$.
Substitute $(\underline{x}, \underline{y}, \underline{z}^{(i)}, W^{(i)}) \left( \boldsymbol{T}^{(i)} \right)^{-1} $ with $\underline{y}:=1$ into $(x, y, z, w)$ in $R^{(i)}$, where $R^{(i)}$ has no non-zero term containing $W^{(i)}$.
\item[(2-7-4)] Regarding $R^{(i)}$ as a polynomial over $K[\underline{x}]$ of degree two with variable $\underline{z}^{(i)}$, compute its discriminant $D^{(i)} \in K[\underline{x}]$.
\item[(2-7-5)] Test whether $D^{(i)}$ splits completely over $K$ or not.
If not, construct the splitting field $K^{\prime}$ of $D^{(i)}$.
Represent all elements in $K$, which we have stored, as elements in $K^{\prime}$.
Replace $K$ by $K^{\prime}$.
\item[(2-7-6)] Compute the roots $\underline x_1^{(i)}$, $\underline x_2^{(i)}$ and $\underline x_3^{(i)}$ over $K$ of $D^{(i)}$.
\end{enumerate}
{\bf Note:} We have $\underline{z}^{(i)} \notin U^{(1)} \cap U^{(2)}$ if one puts $\underline{z}^{(i)}:= (x,y,z,w) \cdot {}^t \mathbf{z}^{(i)} \in \{ X^{(i)}, Y^{(i)}, Z^{(i)} \}$.
Note also that $(\underline{x}, \underline{y}, \underline{z}^{(i)}, W^{(i)}) = (x, y, z, w) \cdot \boldsymbol{T}^{(i)} $.
\item[(2-8)] Set $v_H(U^{(1)},U^{(2)})$ $\leftarrow$ $(\underline x_1^{(1)},\underline x_2^{(1)},\underline x_3^{(1)},\underline x_1^{(2)},\underline x_2^{(2)},\underline x_3^{(2)})$, and we set $V_H$ $\leftarrow V_H$ $\cup \{ v_H (U^{(1)},U^{(2)}) \}$ .
\end{enumerate}
\item Output $V_H$.
\end{enumerate}
\end{algorithm}

\section{Concluding remark}\label{sec:conc}

In this paper, we presented the two main algorithms to find/enumerate superspecial Howe curves (and thus canonical curves of genus $4$).
The first one determines the (non-)existence of such curves by collecting their explicit defining equations.
The second one enumerates the isomorphism classes of superspecial Howe curves listed by the first one.
By executing these algorithms over Magma, we have a result that there exists a superspecial Howe curve for every prime $p=5$ or $7 < p \leq 331$, and we determined their $\overline{\mathbb{F}_p}$-isomorphism classes for all $5 \leq p \leq 53$.

For the construction of the main algorithms, several sub-algorithms were also constructed.
In particular one of them is an algorithm to test whether given two Howe curves are isomorphic (over an algebraically closed field) to each other or not.
While the complexity of this isomorphism-test has not been analyzed yet, experimental results show that it is expected to classify the isomorphism classes extremely more efficiently than the conventional method by Kudo-Harashita, 2017~\cite{KH17}.
Benchmarking shows that our algorithm classified isomorphisms classes $15$-$214$ times (on average $96$ times) faster than the conventional method.

We conclude that the main algorithms shall contribute to solve the problem to find or enumerate superspecial curves for larger $p$.
We also hope that algorithms proposed in this paper could be generalized and applied to other curves obtained by fiber products.



\subsection*{Acknowledgments}
This work was supported by JSPS Grant-in-Aid for Scientific
Research (C) 17K05196, and JSPS Grant-in-Aid for Research Activity Start-up 18H05836 and 19K21026.
The authors thank Masaya Yasuda for his comments which improved the exposition.
The authors also thank Kazuhiro Yokoyama for his helpful suggestions and comments on algorithm details and complexity analysis.

\end{document}